\documentclass{amsart}
\usepackage[utf8]{inputenc}
\usepackage{amsthm}
\usepackage{amsfonts}
\usepackage{amsmath}
\usepackage{mathtools}
\usepackage{hyperref}
\usepackage{amssymb}
\usepackage{url}
\usepackage{fullpage}
\usepackage{comment}
\usepackage{commath}
\usepackage{graphicx}
\usepackage{subcaption}
\usepackage{standalone}
\usepackage{tikz}
\usepackage{pgfplots}
\usepackage[parfill]{parskip}
\usepackage{longtable}
\usepackage[T1]{fontenc}
\usepackage{currvita}
\pgfplotsset{compat=1.17}
\usepackage{thm-restate}

\makeatletter
\def\subsubsection{\@startsection{subsubsection}{3}%
  \z@{.5\linespacing\@plus.7\linespacing}{.1\linespacing}%
  {\normalfont\itshape}}
\makeatother


\title{$k$-Diophantine $m$-tuples in finite fields}
\author{Trajan Hammonds, Seoyoung kim, Steven J. Miller, Arjun nigam, Kyle Onghai, dishant saikia, lalit M. sharma}
\date{\today}

\theoremstyle{plain}
\newtheorem{thm}{Theorem}[section]
\newtheorem*{thm*}{Theorem}

\newtheorem{lem}[thm]{Lemma}

\theoremstyle{definition}
\newtheorem{defin}[thm]{Definition}

\theoremstyle{remark}

\numberwithin{equation}{section}

\newcommand{\Addresses}{
    \bigskip
  
    Trajan Hammonds, \textsc{Department of Mathematics, Princeton University, USA}\\ \nopagebreak
    \textit{E-mail address}: \href{mailto:trajanh@princeton.edu}{trajanh@princeton.edu}
    
    \medskip
    
    Seoyoung Kim, \textsc{Department of Mathematics and Statistics, Queen's University, Canada}\\ \nopagebreak
    \textit{E-mail address}: \href{mailto:sk206@queensu.ca}{sk206@queensu.ca}
  
    \medskip
    
    Steven J. Miller, \textsc{Department of Mathematics and Statistics, Williams College, USA}\\ \nopagebreak
    \textit{E-mail address}: \href{mailto:sjm1@williams.edu}{sjm1@williams.edu}
    
    \medskip
  
    Arjun Nigam, \textsc{University of Arizona, USA}\\ \nopagebreak
    \textit{E-mail address}: \href{mailto:arjunnigam1611@email.arizona.edu}{arjunnigam1611@email.arizona.edu}
  
    \medskip
  
    Kyle Onghai, \textsc{University of California, Los Angeles, USA}\\ \nopagebreak
    \textit{E-mail address}: \href{mailto:onghaik@g.ucla.edu}{onghaik@g.ucla.edu}
  
    \medskip
  
    Dishant Saikia, \textsc{Tezpur University, India}\\ \nopagebreak
    \textit{E-mail address}: \href{mailto:saikiadishant@gmail.com}{saikiadishant@gmail.com}
  
    \medskip
  
    Lalit M. Sharma, \textsc{University of Delhi, India}\\ \nopagebreak
    \textit{E-mail address}: \href{mailto:sharmalalit1729@gmail.com}{sharmalalit1729@gmail.com}
}

\usepackage{indentfirst}
\setlength{\parindent}{20pt}

\begin{document}
\maketitle

\begin{abstract}
    In this paper, we define a $k$-Diophantine $m$-tuple to be a set of $m$ positive integers such that the product of any $k$ distinct positive integers is one less than a perfect square. We study these sets in finite fields $\mathbb{F}_p$ for odd prime $p$ and guarantee the existence of a  $k$-Diophantine m-tuple provided $p$ is larger than some explicit lower bound. We also give a formula for the number of 3-Diophantine triples in $\mathbb{F}_p$ as well as an asymptotic formula for the number of $k$-Diophantine $k$-tuples.
\end{abstract}

\tableofcontents

\section{Introduction}
The study of Diophantine $m$-tuples can be traced to the work of Diophantus of Alexandria, and has caught the attention of numerous leading mathematicians since then. In the 3rd century, Diophantus observed that the set of four numbers: $\left\{\frac{1}{16} , \frac{33}{16}, \frac{17}{4}, \frac{105}{16}\right\}$ satisfy an interesting property that the product of any two elements in the set is one less than a rational square. This is the first example of a rational Diophantine quadruple. In the 17th century, Fermat became interested in finding integer solutions and eventually found the Diophantine quadruple $\{1, 3, 8, 120\}$. Euler discovered that the Diophantine quadruple given by Fermat can be extended to form a rational Diophantine quintuple, namely $\left\{1, 3, 8, 120, \frac{777480}{8288641}\right\}$. These sets of numbers studied by Diophantus, Fermat and Euler are now known as \textbf{Diophantine $m$-tuples}, which we define below.

\begin{defin}
    Let $S$ be a set of $m$ positive integers $\{a_1, a_2, \ldots, a_m\}$. If $a_{i}a_{j}+1$ is a perfect square for all $i, j$ such that $1\le i<j\le m$, then $S$ is a \textbf{Diophantine $m$-tuple}.   
\end{defin}

Similarly, we define a rational Diophantine $m$-tuple as follows. If $S$ is a set of $m$ positive rationals and satisfies the same condition, it is called a \textbf{rational Diophantine $m$-tuple}. For a more in-depth overview of the history of this problem, see~\cite[p. 513-519]{dickson1920history}. \par
The first important result concerning the size of Diophantine $m$-tuples was given by Baker and Davenport in 1969 \cite{bakdav}. They showed using Baker’s theory on linear forms in logarithms of algebraic numbers that if $d$ is a positive integer such that $\{1, 3, 8, d\}$ is a Diophantine quadruple, then $d$ has to be 120, implying that $\{1,3,8,120\}$ cannot be extended to a Diophantine quintuple. In 1979, Arkin, Hoggatt and Strauss showed that any Diophantine triple can be extended to a Diophantine quadruple \cite{ahs79}. In 2004, Dujella proved that there is no Diophantine sextuple and that there are at most finitely many Diophantine quintuples~\cite{duj2004diophantine}. In 2018, He, Togbé and Ziegler showed that there does not exist a Diophantine quintuple~\cite{he2018diophantine}.  

In the case of rationals, no absolute upper bound for the size of rational Diophantine $m$-tuples is known. Euler proved that there are infinitely many rational Diophantine quintuples. In 1999, Gibbs found the first rational Diophantine sextuple $\left\{\frac{11}{192} , \frac{35}{192}, \frac{155}{27}, \frac{512}{27}, \frac{1235}{48}, \frac{180873}{16}\right\}$  \cite{gib99}. In 2017, Dujella, Kazalicki, Miki\'{c}, Szikszai proved that there are infinitely many rational Diophantine sextuples \cite{ammm17}. It is not known whether there are rational Diophantine septuples. 

There are many generalizations of Diophantine $m$-tuples. One natural generalization which has been extensively studied is if we replace the number 1 in ``$a_i a_j + 1$'' with $n$. These sets are called \emph{Diophantine $m$-tuples with the property $D(n)$}. Recently, Bliznac Trebješanin and Filipin proved that there is
no $D(4)$-quintuple \cite{blif19}. Dujella,
Filipin and Fuchs proved that there does not exist a $D(-1)$-quintuple and that there are at most finitely many $D(-1)$-quadruples,
all of them containing the element 1 \cite{df05, dff07}. Recently, Bonciocat, Cipu and Mignotte proved the nonexistence of $D(-1)$-quadruples \cite{Bonciocat2020ThereIN}. 

There is an interesting connection between Diophantine $m$-tuples and elliptic curves. If $\{a,b,c\}$ are assumed to form a Diophantine triple, then in order to extend this triple to a quadruple, the task is to find an integer $x$ such that $ax+1, bx+1$ and $cx+1$ are all squares of integers. Finding a solution $x\in \mathbb{Z}$ to the three simultaneous conditions implies that there exists $y\in \mathbb{Z}$ such that
    \begin{equation}
        y^2\ =\ (ax+1)(bx+1)(cx+1);\end{equation}
this equation describes an elliptic curve. Hence, extending a Diophantine triple to a Diophantine quadruple is equivalent to finding integer solutions of the mentioned elliptic curve. A more detailed survey on Diophantine $m$-tuples and its connections with elliptic curves can be found at~\cite{dujella} and~\cite{dujella2021number}.

While most of the work on Diophantine $m$-tuples has been done over integers and rationals, Diophantine $m$-tuples may be studied over any commutative ring with identity. Studies have been made over the
ring of integers in a quadratic field (\cite{fra08}, \cite{fra09} and \cite{frso14}) by Franuši\'{c} and Soldo. In 2013, Franuši\'{c} also studied Diophantine quadruples over a cubic field \cite{fra13}. Recently, Dujella and Kazalicki studied Diophantine $m$-tuples over finite fields $\mathbb{F}_p$ where $p$ is an odd prime in~\cite{djkaz}. They proved the existence of a Diophantine $m$-tuple in $\mathbb{F}_p$ where $p$ is a prime and $p>2^{2m-2}m^2$. Using character sums, they also derive expressions for the number of Diophantine pairs, triples, and quadruples in $\mathbb{F}_p$ for given prime $p$, and provide an asymptotic formula for the number of Diophantine $m$-tuples. In recent years, there has been a lot of activity on Diophantine $m$-tuples and its generalizations. To get an extensive list of papers on Diophantine $m$-tuples, we refer the interested reader to Dujella's list of references: \url{https://web.math.pmf.unizg.hr/~duje/ref.html}. 

We study a generalization of Diophantine $m$-tuples called $k$-Diophantine $m$-tuples.

\begin{defin}
    Let $S$ be a set of $m$ positive integers $\{a_1, a_2, \ldots, a_m\}$. If $1+\prod_{j=i_1}^{i_k} a_{j}$ is a perfect square for all $i_1, \ldots, i_k\in \{1,2,\ldots,m\}$ such that $1\le i_1 < i_2 < \cdots < i_k\le m$, then $S$ is a \textbf{$k$-Diophantine $m$-tuple}.
\end{defin}

One motivation behind studying these sets is the relationship between $k$-Diophantine $k$-tuples and a well-known, open problem in number theory known as Brocard's problem. Brocard's problem asks for all integer solutions $(n, m)$ to the equation $n!+1 = m^2$. It can be clearly observed that if the elements of a $k$-Diophantine $k$-tuple are consecutive natural numbers starting from 1, then it gives a solution for Brocard's problem. Currently, there are only three known pairs of numbers solving Brocard's problem: $(4,5), (5,11), (7,71)$. Erdős conjectured that no other solutions exist. In 1993, Overholt proved that there are only finitely many solutions to Brocard's problem provided that the $abc$ conjecture is true \cite{10.1112/blms/25.2.104}. Till now, computations for $n$ up to a magnitude of $10^{15}$ have been done but yielded no further solutions for the problem.  

Moreover, just as Brocard's problem is not a trivial exercise, the same can be said of finding $k$-Diophantine $m$-tuples. Similar to the connection between Diophantine triples, i.e., 2-Diophantine triples, and elliptic curves, a connection can also be made between 3-Diophantine triples and elliptic curves. Indeed, the problem of extending a 3-Diophantine triple $\{a, b, c\}$ to a 3-Diophantine quadruple $\{a, b, c, d\}$ is equivalent to finding integer solutions of the elliptic curve
\begin{equation}
    y^2 \ = \   (abx+1)(acx+1)(bcx+1)
.\end{equation}
Hence, for even the simpler cases of $k$ and $m$, finding $k$-Diophantine $m$-tuples is already of the same complexity and importance as finding integral solutions of an elliptic curve. As no efficient, general algorithm to find integral solutions of an elliptic curve has been found yet, there is no algorithm to find the number of ways to extend a 3-Diophantine triple to a 3-Diophantine quadruple. In fact, the same can be said about the problem of extending $k$-Diophantine $k$-tuples to $k$-Diophantine $(k+1)$-tuples.

Inspired by the work of Dujella and Kazalicki~\cite{djkaz}, we studied $k$-Diophantine $m$-tuples in finite fields $\mathbb{F}_p$ where $p$ is an odd prime. We show the existence of at least one $k$-Diophantine $m$-tuple for all primes $p$ that are sufficiently large, and give a formula for the number of 3-Diophantine triples in $\mathbb{F}_p$.

In Section~\ref{prelims}, we provide results that we need to present the proofs of our new results. 
Next, we show the following theorems. 

\begin{restatable}{thm}{existence}
\label{thm:existence}
    Let $m \ge k$ be an integer. If $p>4^{\binom{m}{k-1} +1} \left(\frac{\binom{m}{k-1}}{2} + m+1\right)^2$ is a prime, then there exists at least one $k$-Diophantine $m$-tuple in $\mathbb{F}_{p}$.
\end{restatable}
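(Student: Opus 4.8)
The plan is to induct on $m$, building the tuple one element at a time and invoking Weil's bound on character sums to guarantee that each extension step succeeds. It is convenient to strengthen the inductive hypothesis: I would show that for $p$ above the stated threshold there is a $k$-Diophantine $m$-tuple $\{a_1,\dots,a_m\}\subseteq\mathbb F_p^\times$ whose subset products $\prod_{i\in S}a_i$, taken over subsets $S$ of size at most $k-1$, are pairwise distinct within each size class; this genericity is exactly what makes the character-sum estimate below applicable at the next step. For $m\le k-1$ the condition is vacuous apart from choosing $m$ distinct nonzero elements, which is possible since $p$ far exceeds $m$.

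For the inductive step, suppose $\{a_1,\dots,a_{m-1}\}$ is such a tuple. Adjoining a new element $x$ creates exactly $\binom{m-1}{k-1}$ new square conditions, one for each $(k-1)$-subset $T\subseteq\{1,\dots,m-1\}$: namely $1+b_Tx$ must be a nonzero square, where $b_T=\prod_{i\in T}a_i$. Writing $\chi$ for the quadratic character and using the indicator $\tfrac{1+\chi(t)}{2}$ for nonzero squares, I would count the admissible $x$ by
\begin{equation}
N \;=\; \sum_{x\in\mathbb F_p}\ \prod_{T}\frac{1+\chi(1+b_Tx)}{2}
\;=\; \frac{1}{2^{n}}\sum_{x}\ \sum_{\mathcal S}\ \chi\!\Big(\prod_{T\in\mathcal S}(1+b_Tx)\Big),
\end{equation}
where $n=\binom{m-1}{k-1}$ and $\mathcal S$ ranges over all subsets of the family of $(k-1)$-subsets. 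The term $\mathcal S=\varnothing$ gives the main term $p/2^{n}$.

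The key input is that, by the genericity hypothesis, the $b_T$ are distinct and nonzero, so for each nonempty $\mathcal S$ the polynomial $\prod_{T\in\mathcal S}(1+b_Tx)$ is squarefree of degree $|\mathcal S|$, and Weil's bound gives $\big|\sum_x\chi(\cdots)\big|\le(|\mathcal S|-1)\sqrt p$. Summing over $\mathcal S$ yields $|N-p/2^{n}|\le \tfrac n2\sqrt p$. One then discards the $O_{m,k}(1)$ values of $x$ that are spurious or that would destroy genericity: the at most $n$ roots of the factors $1+b_Tx$, the elements already used, and the finitely many $x$ for which a newly created subset product would collide with an existing one. Each such deletion changes the count by a bounded amount independent of $p$. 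Since $\binom{m-1}{k-1}\le\binom{m}{k-1}$ and the threshold is chosen so that $\sqrt p>2^{\binom m{k-1}+1}\big(\tfrac{\binom m{k-1}}2+m+1\big)$, the main term $p/2^{n}$ dominates $\tfrac n2\sqrt p$ together with the bounded corrections, so a valid extension $x$ exists and the induction closes.

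The main obstacle is the bookkeeping of genericity: Weil's estimate fails if two linear factors coincide, since the product could then be a perfect square and the corresponding sum could be of size $\approx p$. I must therefore ensure the constructed tuples keep their $(k-1)$-fold products distinct, and verify that maintaining this property removes only $O_{m,k}(1)$ candidate values of $x$ at each step, a count that is swallowed by the exponential slack $2^{\binom m{k-1}}$ in the threshold. A secondary point is checking that the explicit constants in the bound (the $+\,m+1$ and the extra factor of $4$) really do absorb all the boundary terms uniformly across the induction; this is a routine but careful calculation.
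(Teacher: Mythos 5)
Your proposal takes essentially the same route as the paper: induction on $m$, counting admissible extensions $x$ by expanding $\prod_T \tfrac{1+\chi(1+b_Tx)}{2}$ and applying Weil's bound, which the paper simply packages as Lemma~\ref{lidl-5.64} (Lidl--Niederreiter, Ex.~5.63/5.64) rather than re-deriving inline. Your genericity bookkeeping addresses a real point the paper leaves implicit (its cited lemma requires the shifts $\overline{a_{i_1}\cdots a_{i_{k-1}}}$ to be distinct, which is not automatic, though coincident products merely duplicate conditions and can only improve the count), and the constant-chase you defer is exactly the computation the paper carries out starting from the inequality \eqref{eqn:3.1}.
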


Then, we prove a theorem about the number of 3-Diophantine triples in $\mathbb{F}_p$. 

\begin{restatable}{thm}{num}
\label{thm:num-3-Dio}
    Let $N_3(p)$ be the number of 3-Diophantine triples in $\mathbb{F}_p$. If $p\equiv1 \text{ mod 3}$, let $a$ be an integer such that $a\equiv2 \text{ mod 3}$ and $p=a^2+3b^2$ for some integer $b>0$. Then,
    \begin{equation}
    N_3(p)\ = \  
        \begin{cases} 
            \frac{a+1}{3} + \binom{p-1}{3}/2,& \emph{for }p\equiv 1 \pmod 3 \\
            \binom{p-1}{3}/2,& \emph{for }p\equiv 2 \pmod 3.
        \end{cases}
     \end{equation}
\end{restatable}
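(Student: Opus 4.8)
The plan is to count \emph{ordered} triples and divide by $6$. Since $m=k=3$, a $3$-Diophantine triple is simply a set $\{a,b,c\}$ of three distinct nonzero elements of $\mathbb{F}_p$ for which $abc+1$ is a square (the nonzero restriction is the one forced by the leading term $\binom{p-1}{3}$), and there is only one defining condition to check. Write $\chi$ for the quadratic character, with the convention $\chi(0)=0$, so the number of $y$ with $y^2=t$ is $1+\chi(t)$ and the indicator that $t$ is a square is $\tfrac12\bigl(1+\chi(t)\bigr)+\tfrac12\mathbf{1}[t=0]$. Summing this over all ordered triples $(a,b,c)$ of distinct nonzero elements splits $6N_3(p)$ into three pieces: a main term $\tfrac12(p-1)(p-2)(p-3)$, a character sum $\tfrac12\sum_{\mathrm{distinct}}\chi(abc+1)$, and a correction $\tfrac12 D$, where $D$ counts distinct nonzero ordered triples with $abc=-1$ (these are genuine triples, since $0$ is a square).

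The first key observation is that the product map on $(\mathbb{F}_p^\times)^3$ has uniform fibers: every $u\in\mathbb{F}_p^\times$ equals $abc$ for exactly $(p-1)^2$ nonzero ordered triples. Hence $S:=\sum_{a,b,c\in\mathbb{F}_p^\times}\chi(abc+1)=(p-1)^2\sum_{u\ne 0}\chi(u+1)=-(p-1)^2$, using $\sum_{u\in\mathbb{F}_p}\chi(u+1)=0$. To restrict to distinct triples I would run inclusion--exclusion over the coincidence patterns, which introduces $\Sigma_2:=\sum_{a,c\ne0}\chi(a^2c+1)$ and $\Sigma_3:=\sum_{a\ne0}\chi(a^3+1)$; a one-line substitution gives $\Sigma_2=-(p-1)$, and one finds $\sum_{\mathrm{distinct}}\chi(abc+1)=S-3\Sigma_2+2\Sigma_3$. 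The same inclusion--exclusion applied to $abc=-1$ yields $D=(p-1)(p-4)+2Q$, where $Q=\#\{a:a^3=-1\}=\gcd(3,p-1)$ (the map $a\mapsto -a$ bijects cube roots of $-1$ with cube roots of $1$). Assembling the pieces, the $(p-1)(p-4)$ contributions cancel and I am left with the clean identity
\begin{equation}
N_3(p)\ =\ \tfrac12\binom{p-1}{3}+\frac{\Sigma_3+Q}{6}.
\end{equation}

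It then remains to evaluate $\Sigma_3$ and $Q$. When $p\equiv 2\pmod 3$ everything is elementary: cubing is a bijection of $\mathbb{F}_p^\times$, so $\Sigma_3=\sum_{u\ne0}\chi(u+1)=-1$ and $Q=1$, whence the correction vanishes and $N_3(p)=\tfrac12\binom{p-1}{3}$. When $p\equiv 1\pmod 3$ we have $Q=3$, and the heart of the matter is the cubic--quadratic sum $\Sigma_3$. I would pass to $\sum_{x\in\mathbb{F}_p}\chi(x^3+1)$ and recognize it as $-a_p$, the trace of Frobenius of $E:y^2=x^3+1$ (from $\#E(\mathbb{F}_p)=p+1+\sum_x\chi(x^3+1)$), so that $\Sigma_3=-a_p-1$.

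The hard part will be the exact evaluation of $a_p$ for this CM curve. Writing $\#\{a:a^3=t\}=1+\chi_3(t)+\overline{\chi_3}(t)$ for a cubic character $\chi_3$ turns $\sum_x\chi(x^3+1)$ into $2\,\mathrm{Re}\,J(\chi_3,\chi)$ for the Jacobi sum $J(\chi_3,\chi)$, whose modulus is $\sqrt p$; the real content is to pin down its argument and \emph{sign}. This is precisely the classical determination of $a_p$ for $y^2=x^3+1$ in terms of the representation $p=a^2+3b^2$, where the normalization $a\equiv 2\pmod 3$ is exactly what fixes the sign, giving $a_p=-2a$ and hence $\Sigma_3=2a-1$. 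Substituting into the displayed identity yields $\frac{\Sigma_3+Q}{6}=\frac{2a+2}{6}=\frac{a+1}{3}$, the stated correction. I would either cite this CM/Jacobi-sum evaluation (for instance Ireland--Rosen) or carry it out directly, checking that the sign and the $a\equiv 2\pmod 3$ normalization are consistent; the cases $p=7$ (where $a=2$, $a_p=-4$) and $p=13$ (where $a=-1$, $a_p=2$) serve as confirming sanity checks.
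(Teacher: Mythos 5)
Your proposal is correct and follows essentially the same route as the paper: expand the indicator of ``$abc+1$ is a square'' over ordered distinct nonzero triples, split off the main term, the character sum, and the $abc\equiv-1$ correction, and evaluate $\sum_{c\neq 0}\left(\frac{c^3+1}{p}\right)$ via the CM curve $y^2=x^3+1$ and the classical $p=a^2+3b^2$ formula (the paper cites Gauss's theorem from Ireland--Rosen for exactly this). The only difference is bookkeeping: you reduce to distinct triples by fiber-counting and inclusion--exclusion, where the paper iterates its linear/quadratic Legendre-sum lemmas and proves the $abc\equiv-1$ count by a separate residue case analysis; both yield the same intermediate quantities $-(p-1)(p-4)+2\Sigma_3$ and $(p-2)(p-3)+2(\gcd(3,p-1)-1)$.
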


To do this, we need to show the following.

\begin{restatable}{thm}{cubecount} We have
\label{thm:cubecount}
    \begin{equation}
    \#\left\{(a,b,c) \in \mathbb{F}_p^3 : abc+1 \equiv 0 \pmod p \right\} \ = \  
        \begin{cases}
            (p-2)(p-3)+4, &\emph{if } p \equiv 1 \pmod {3}\\
            (p-2)(p-3), &\emph{if } p \equiv 2 \pmod {3}.
        \end{cases}
    \end{equation}
\end{restatable}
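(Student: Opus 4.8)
The plan is to count ordered triples directly and then impose the distinctness that the triple structure demands. First I would observe that $abc \equiv -1 \pmod p$ forces $a,b,c \in \mathbb{F}_p^\times$, since a zero entry would make the product vanish. Counting all ordered triples with nonzero entries is then immediate: $a$ and $b$ may be chosen freely in $\mathbb{F}_p^\times$ and $c = -(ab)^{-1}$ is determined and automatically nonzero, giving $(p-1)^2$ ordered solutions. Since the quantity we actually want records triples of \emph{distinct} field elements (as required by the notion of a $3$-Diophantine triple, and as forced by the displayed answer, which is not $(p-1)^2$), the remaining work is to subtract the degenerate solutions in which two or three coordinates coincide.

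Next I would run inclusion--exclusion over the three coincidence events $\{a=b\}$, $\{a=c\}$, $\{b=c\}$. Each single event, say $a=b$, forces $a^2 c = -1$, so $c$ is determined by the nonzero choice of $a$ and there are $p-1$ such triples; by symmetry each of the three events contributes $p-1$. Every pairwise intersection, and the triple intersection, collapses to the condition $a=b=c$ with $a^3 = -1$. Writing $r$ for the number of solutions of $x^3 = -1$ in $\mathbb{F}_p^\times$, inclusion--exclusion gives that the number of degenerate triples is $3(p-1) - 3r + r = 3(p-1) - 2r$, so the number of triples with distinct entries is $(p-1)^2 - 3(p-1) + 2r = (p-1)(p-4) + 2r$.

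The crux is therefore to evaluate $r$, the number of cube roots of $-1$, and this is exactly where the dichotomy modulo $3$ appears. I would factor $x^3 + 1 = (x+1)(x^2 - x + 1)$; the root $x=-1$ is always present (and is not a root of the quadratic factor for $p \neq 3$, since $x^2 - x + 1$ takes the value $3$ there), so $r = 1 + (\text{number of roots of } x^2 - x + 1)$. The quadratic has discriminant $-3$, so it splits over $\mathbb{F}_p$ precisely when $-3$ is a quadratic residue, which by quadratic reciprocity happens exactly when $p \equiv 1 \pmod 3$. Hence $r = 3$ when $p \equiv 1 \pmod 3$ and $r = 1$ when $p \equiv 2 \pmod 3$. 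Substituting into $(p-1)(p-4) + 2r$ yields $p^2 - 5p + 10 = (p-2)(p-3) + 4$ in the first case and $p^2 - 5p + 6 = (p-2)(p-3)$ in the second, as claimed.

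I expect the only genuine obstacle to be the residue computation for $r$: establishing that $x^2 - x + 1$ splits if and only if $p \equiv 1 \pmod 3$, i.e., the standard fact that $-3$ is a square modulo $p$ exactly in that case. Everything else is bookkeeping in the inclusion--exclusion; the one point requiring a little care is confirming that the pairwise and triple coincidence events all reduce to the single condition $a^3 = -1$, so that they share the common count $r$.
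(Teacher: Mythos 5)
Your argument is correct, and it reaches the stated formula by a genuinely different decomposition than the paper's. You first observe (correctly) that the displayed answer forces the implicit reading ``distinct $a,b,c$'', count all $(p-1)^2$ ordered solutions of $abc=-1$ in $(\mathbb{F}_p^\times)^3$, and then remove coincidences by a single global inclusion--exclusion over the events $\{a=b\},\{a=c\},\{b=c\}$, arriving at $(p-1)(p-4)+2r$ with $r=\#\{x: x^3=-1\}$. The paper instead stratifies by the value $l=ab$ (which determines $c=-l^{-1}$), splits the nonzero $l$ into quadratic residues and non-residues to count pairs with $a\neq b$, and then does a further case analysis on which residues $l$ admit coincidences $c=a$ or $c=b$, ending with a weighted count $\frac{p+1}{2}(p-3)+\frac{p-3}{2}(p-5)$ (and its analogue for $p\equiv 1\pmod 3$). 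Both routes ultimately hinge on the same arithmetic input --- the number of cube roots of $-1$ in $\mathbb{F}_p$, namely $\gcd(3,p-1)$ --- but you justify it cleanly via the factorization $x^3+1=(x+1)(x^2-x+1)$ and the fact that $\left(\frac{-3}{p}\right)=1$ iff $p\equiv 1\pmod 3$, whereas the paper simply cites Euler's criterion. Your version buys a shorter, more symmetric bookkeeping (three symmetric events, all pairwise and triple intersections collapsing to $a=b=c$, $a^3=-1$) and avoids the residue/non-residue stratification entirely; the paper's version makes visible exactly which values of $l=ab$ lose solutions to coincidences, which is mildly more informative but harder to read. One small point worth making explicit in a write-up: your identification of each pairwise intersection with $\{a=b=c,\ a^3=-1\}$ is what makes the inclusion--exclusion collapse to $3(p-1)-2r$, and you have checked the two side conditions ($-1$ is not a root of $x^2-x+1$ for $p\neq 3$, and the quadratic has distinct roots since its discriminant $-3$ is nonzero), so $r\in\{1,3\}$ exactly as claimed.
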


Finally, we prove the following asymptotic formula for the number of $k$-Diophantine $k$-tuples in $\mathbb{F}_p$ holds.
\begin{restatable}{thm}{asymptotic}
\label{thm:asymptotic}
    Let $N_{k}(p)$ be the number of $k$-Diophantine $k$-tuples in $\mathbb{F}_p$. Then 
    \begin{equation}
    N_{k}(p)\ \thicksim\ \frac{p^k}{k!\cdot2}+o(p^k).
    \end{equation}
\end{restatable}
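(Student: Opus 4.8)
The plan is to count via the quadratic character and show that every contribution except the naive ``half of all $k$-subsets'' term is of lower order. Write $\chi$ for the Legendre symbol on $\mathbb{F}_p$, extended by $\chi(0)=0$, and let $T$ denote the number of \emph{ordered} $k$-tuples $(a_1,\dots,a_k)$ of distinct elements with $1+a_1\cdots a_k$ a square, so that $N_k(p)=T/k!$. For $x\in\mathbb{F}_p$ the indicator that $x$ is a perfect square equals $\tfrac12\bigl(1+\chi(x)\bigr)+\tfrac12\,\mathbf{1}[x=0]$, since the term $\tfrac12(1+\chi(x))$ correctly counts nonzero squares but registers only $\tfrac12$ at $x=0$. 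Applying this with $x=1+a_1\cdots a_k$ and summing over distinct tuples splits $T$ into three pieces: a main term $\tfrac12\sum_{\mathrm{distinct}}1$, a character term $\tfrac12\sum_{\mathrm{distinct}}\chi(1+a_1\cdots a_k)$, and a correction term $\tfrac12\#\{\text{distinct tuples}:a_1\cdots a_k=-1\}$.

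The main term is $\tfrac12\,p(p-1)\cdots(p-k+1)=\tfrac12 p^k+O(p^{k-1})$, and dividing by $k!$ produces exactly $\frac{p^k}{k!\cdot 2}$ up to lower order. The correction term counts tuples whose product is the fixed value $-1$; choosing $a_1,\dots,a_{k-1}$ freely among nonzero elements forces $a_k$, so this is $O(p^{k-1})$. It remains to bound the character term, and here I would first replace the sum over distinct tuples by the sum over all tuples in $\mathbb{F}_p^k$: the tuples with a repeated coordinate number $O(p^{k-1})$, and each contributes at most $1$ in absolute value, so the two sums differ by $O(p^{k-1})$.

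The remaining sum $\Sigma=\sum_{(a_1,\dots,a_k)\in\mathbb{F}_p^k}\chi(1+a_1\cdots a_k)$ can be evaluated exactly rather than estimated. Grouping tuples by the value $v=a_1\cdots a_k$ of their product, the number of tuples with product $v$ is $(p-1)^{k-1}$ for every $v\neq 0$ and $p^k-(p-1)^k$ for $v=0$; the key point is that this count is \emph{constant} across all nonzero $v$. Hence
\[
\Sigma\ =\ \bigl(p^k-(p-1)^k\bigr)\chi(1)+(p-1)^{k-1}\sum_{v\neq 0}\chi(1+v).
\]
Since $\sum_{u\in\mathbb{F}_p}\chi(u)=0$, the inner sum equals $-\chi(1)=-1$, giving $\Sigma=\bigl(p^k-(p-1)^k\bigr)-(p-1)^{k-1}=O(p^{k-1})$. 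Assembling the three pieces yields $N_k(p)=\frac{p^k}{k!\cdot 2}+O(p^{k-1})$, which is the claimed asymptotic. The only real obstacle is the character term, and the plan sidesteps any appeal to deep bounds (such as Weil's) by exploiting the fact that the fibers of the product map over nonzero values all have the same size, so orthogonality of $\chi$ collapses the sum to lower order. A minor point to settle at the outset is whether the elements are drawn from $\mathbb{F}_p$ or $\mathbb{F}_p^\times$ and whether $0$ is admissible, but either convention changes only $O(p^{k-1})$ terms and hence not the stated asymptotic.
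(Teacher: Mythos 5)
Your proof is correct, and it departs from the paper's at the one genuinely nontrivial step. The paper opens with the same decomposition you use --- writing $k!\cdot 2\cdot N_k(p)$ as a distinct-tuple count, plus a character sum, plus the number of tuples with $a_1\cdots a_k\equiv -1\pmod p$, the last bounded by $(p-1)^{k-1}$ exactly as in your correction term --- but it then disposes of the character sum by invoking Weil's bound (Theorem~\ref{weil-estimate}), obtaining $\bigl|\sum\chi(1+a_1\cdots a_k)\bigr|\le p^{k-1}\sqrt{p}$. You instead evaluate the completed sum exactly: since the fibers of the product map $(a_1,\dots,a_k)\mapsto a_1\cdots a_k$ over nonzero values all have cardinality $(p-1)^{k-1}$, orthogonality of $\chi$ collapses $\sum_{v\ne 0}\chi(1+v)$ to $-1$ and the whole sum to $O(p^{k-1})$, and passing between the completed sum and the sum over distinct nonzero tuples costs only a further $O(p^{k-1})$. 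Your route is more elementary (no appeal to Weil) and gives the sharper, power-saving error $O(p^{k-1})$ in place of the paper's $O(p^{k-1/2})$; your closing remark that the conventions on $0$ and on distinctness only perturb $O(p^{k-1})$ terms is also correct. The trade-off is that the fiber-counting trick exploits the special shape of the single condition on $1+a_1\cdots a_k$: for counting $k$-Diophantine $m$-tuples with $m>k$, where several square conditions must hold simultaneously and the relevant fibers are no longer equidistributed, it is the Weil-type estimate that would survive.
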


\section{Preliminaries} \label{prelims}

\subsection{Legendre Symbol and Their Sums} \label{leg-symb}


First, let us define an operation from number theory known as the Legendre symbol.


We recall that if $a, p \in \mathbb{Z}$ with $p$ prime, $\gcd(a, p) = 1$, then the \textbf{Legendre Symbol}, denoted as $\Big(\frac{a}{p}\Big)$ is
\begin{equation}
        \left(\frac{a}{p}\right)\ \coloneqq\ \begin{cases}
            0 &\mbox{if } p \mid a \\
            1 &\mbox{if $a$ is a quadratic residue modulo $p$} \\
        -1 &\mbox{if $a$ is a quadratic nonresidue modulo $p$}.
        \end{cases}
    \end{equation}
Note: In the finite field $F_p$ where $p$ is an odd prime, the Legendre symbol is equivalent to the quadratic character \cite[p.~191]{lidl1997finite}.

In determining the formula for the number of 3-Diophantine triples in $\mathbb{F}_p$, we relied on two well-known sums of Legendre symbols. Consider a given polynomial $f$ with integer coefficients. The two well-known sums are special cases of the sum 
\begin{equation}
    \sum_{x=0}^{p-1} \left( \frac{f(x)}{p} \right).
.\end{equation}

If $f$ is linear, then we have the following result.

\begin{lem} \label{lem:lin}
    For arbitrary integers $a$ and $b$, and a prime $p\nmid a$, we have
    \begin{equation}
        \sum_{x=0}^{p-1}\left(\frac{ax+b}{p}\right)\ =\ 0.
    .\end{equation}
\end{lem}

\begin{proof}
     See Lemma \ref{pf-leg-lin} of Appendix \ref{sec:applegsums}.
\end{proof}

If $f$ is quadratic, then we have this next result.

\begin{lem}
\label{lem:quad}
For arbitrary integers $a, b, c$, and a prime $p$ such that $p \nmid a$, then
\begin{equation} 
    \sum_{x=0}^{p-1} \left( \frac{ax^2+bx+c}{p} \right)\ = \  
        \begin{cases}
            (p-1)\left(\frac{a}{p}\right) &\emph{if } p\mid b^2-4ac\\
            -\left(\frac{a}{p}\right) &\emph{otherwise}.
        \end{cases}
\end{equation}
\end{lem}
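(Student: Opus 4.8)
The plan is to reduce the general quadratic sum to a canonical one by completing the square, exploiting that $p$ is odd and $p \nmid a$, so that both $2$ and $a$ are invertible modulo $p$. First I would write $4a(ax^2+bx+c) = (2ax+b)^2 - (b^2-4ac)$ and set $D := b^2 - 4ac$. Since the Legendre symbol is completely multiplicative and $\left(\frac{4}{p}\right) = 1$, the quantity $\left(\frac{4a(ax^2+bx+c)}{p}\right)$ equals both $\left(\frac{a}{p}\right)\left(\frac{ax^2+bx+c}{p}\right)$ and $\left(\frac{(2ax+b)^2 - D}{p}\right)$. Multiplying through by $\left(\frac{a}{p}\right)$ and using $\left(\frac{a}{p}\right)^2 = 1$ yields the pointwise identity $\left(\frac{ax^2+bx+c}{p}\right) = \left(\frac{a}{p}\right)\left(\frac{(2ax+b)^2 - D}{p}\right)$.

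Next I would perform the linear substitution $y = 2ax + b$. Because $p \nmid 2a$, the map $x \mapsto 2ax+b$ permutes a complete residue system modulo $p$, so summing over $x \in \{0,1,\ldots,p-1\}$ is the same as summing over $y \in \{0,1,\ldots,p-1\}$. This gives
\[
\sum_{x=0}^{p-1}\left(\frac{ax^2+bx+c}{p}\right) \ = \ \left(\frac{a}{p}\right)\sum_{y=0}^{p-1}\left(\frac{y^2 - D}{p}\right),
\]
reducing the whole problem to evaluating $S := \sum_{y=0}^{p-1} \left(\frac{y^2 - D}{p}\right)$ in the two cases.

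Finally I would evaluate $S$. When $p \mid D$ the summand is $\left(\frac{y^2}{p}\right)$, which equals $1$ for the $p-1$ nonzero values of $y$ and $0$ at $y=0$, so $S = p-1$ and the total is $(p-1)\left(\frac{a}{p}\right)$, matching the first branch. The nondegenerate case $p \nmid D$ is where I expect the main work to lie. Here I would count solutions of $z^2 = y^2 - D$ in two ways: on one hand, each $t \in \mathbb{F}_p$ has exactly $1 + \left(\frac{t}{p}\right)$ square roots, so the number of pairs $(y,z)$ is $\sum_{y}\bigl(1 + \left(\frac{y^2-D}{p}\right)\bigr) = p + S$; on the other hand, rewriting the equation as $(z-y)(z+y) = -D$ and applying the bijection $(y,z) \mapsto (z-y,\, z+y)$ (valid since $p$ is odd) turns the count into the number of pairs $(u,v)$ with $uv = -D \neq 0$, which is exactly $p-1$. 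Equating the two counts forces $S = -1$, so the total is $-\left(\frac{a}{p}\right)$, matching the second branch. The crux of the argument is thus this hyperbola-counting identity in the nondegenerate case; the remaining steps are routine bookkeeping with multiplicativity and a linear change of variables.
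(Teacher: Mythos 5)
Your proposal is correct. The first two steps—multiplying by $\left(\frac{4a}{p}\right)$ to complete the square and using that $2ax+b$ runs over a complete residue system—are exactly what the paper does, and both arguments reduce the lemma to evaluating $S = \sum_{y}\left(\frac{y^2-D}{p}\right)$ with $D = b^2-4ac$. Where you diverge is in the nondegenerate case $p \nmid D$: the paper quotes an exercise from Apostol asserting $S \equiv -1 \pmod p$ and $|S| \le p$, deduces $S \in \{-1, p-1\}$, and then rules out $S = p-1$ by a term-counting argument showing that $S=p-1$ forces $p \mid D$. You instead evaluate $S$ directly by counting points on the conic $z^2 - y^2 = -D$ in two ways: the fibered count gives $p + S$, and the linear change of variables $(u,v) = (z-y, z+y)$ (invertible since $p$ is odd) turns the conic into the hyperbola $uv = -D$ with exactly $p-1$ points, forcing $S = -1$. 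Your route is self-contained where the paper's leans on an external congruence-plus-bound fact, and it gives the exact value of $S$ in one stroke rather than by elimination; the paper's route has the mild advantage of handling both cases of $D$ uniformly through the dichotomy $S \in \{-1, p-1\}$. Both are complete and correct.
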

\begin{proof}
    See Lemma \ref{lem:pf-leg-quad} of Appendix \ref{sec:applegsums}.
\end{proof}

\subsection{Gauss's Lemma}

\begin{thm} \label{Gauss-Lem}
\emph{(Gauss)} Let $E(\mathbb{F}_p): y^2=x^3+D$ be an elliptic curve. Then for $p\equiv1\text{ mod 3}$ 
\begin{equation}\#E(\mathbb{F}_p)\ = \   
\begin{cases}
p+1+2a & \emph{if $D$ is a sextic residue mod $p$ }\\
p+1-2a  & \emph{if $D$ is cubic but not a quadratic residue mod $p$ }\\
p+1-a\pm 3b & \emph{if $D$ is a quadratic but not a cubic residue mod $p$}\\
p+1+a\pm 3b  & \emph{if $D$ is neither quadratic nor cubic residue mod $p$}\\
\end{cases}
\end{equation} 
where $a$ is an integer such that $a\equiv2 \text{ mod 3}$ and $p=a^2+3b^2$ for some integer $b>0$. For $p\equiv2\text{ mod 3}$, 
\begin{equation}\#E(\mathbb{F}_p)\ = \   p+1.\end{equation}
\end{thm}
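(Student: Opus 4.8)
The plan is to convert the point count into a character sum and evaluate it in each residue class of $p$ modulo $3$. Let $\chi$ denote the quadratic character on $\mathbb{F}_p$ (the Legendre symbol, extended by $\chi(0)=0$). For fixed $x$ the equation $y^2 = x^3+D$ has $1+\chi(x^3+D)$ affine solutions, since a nonzero value $x^3+D$ yields two points when it is a square and none otherwise, while $x^3+D=0$ yields the single point $y=0$. Summing over $x$ and adding the one point at infinity gives
\begin{equation}
\#E(\mathbb{F}_p) \ = \ p+1+\sum_{x\in\mathbb{F}_p}\chi(x^3+D),
\end{equation}
so everything reduces to the sum $S(D) := \sum_{x}\chi(x^3+D)$. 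When $p\equiv 2\pmod 3$ we have $\gcd(3,p-1)=1$, so $x\mapsto x^3$ is a bijection of $\mathbb{F}_p$; reindexing by $t=x^3$ turns $S(D)$ into $\sum_{t}\chi(t+D)$, which vanishes because the Legendre symbol sums to zero over a complete residue system (Lemma~\ref{lem:lin}). This gives $\#E(\mathbb{F}_p)=p+1$, as claimed.

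The substantive case is $p\equiv 1\pmod 3$, where cubing is three-to-one onto the cubes. Fix a nontrivial cubic character $\psi$ on $\mathbb{F}_p^{\times}$, extended by $\psi(0)=0$, taking values in $\mathbb{Z}[\omega]$ with $\omega=e^{2\pi i/3}$. For nonzero $t$ the number of cube roots of $t$ equals $1+\psi(t)+\psi^2(t)$, so grouping $S(D)$ by the value $t=x^3$ yields
\begin{equation}
S(D) \ = \ \sum_{t\in\mathbb{F}_p}\chi(t+D)\ +\ \sum_{t\neq 0}\bigl(\psi(t)+\psi^2(t)\bigr)\chi(t+D).
\end{equation}
The first sum vanishes as before. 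For the remaining two twisted sums (assuming $D\neq 0$, the case $D=0$ being handled directly) a scaling substitution $t=Du$ factors out the values $\chi(D)$ and $\psi^i(D)$ and, after a reflection $u\mapsto -u$ using $\psi(-1)=1$, leaves a Jacobi sum $J(\chi,\psi^i)=\sum_u \chi(u)\psi^i(1-u)$. I would thereby express $S(D)$ as $\chi(D)\bigl(\psi(D)\,J(\chi,\psi)+\psi^2(D)\,J(\chi,\psi^2)\bigr)$, with $J(\chi,\psi^2)=\overline{J(\chi,\psi)}$ since $\chi$ is real. The sextic class of $D$ enters only through the pair $\bigl(\chi(D),\psi(D)\bigr)\in\{\pm1\}\times\{1,\omega,\omega^2\}$.

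The final and hardest step is the classical evaluation of $J(\chi,\psi)$. One shows $|J(\chi,\psi)|^2=p$, so $J(\chi,\psi)$ is an element of norm $p$ in $\mathbb{Z}[\omega]$ lying over the split prime $p$; writing $J(\chi,\psi)=\alpha+\beta\sqrt{-3}$ one gets $\alpha^2+3\beta^2=p$, and the standard congruence on Jacobi sums normalizes the real part so that $\alpha=a$ with $a\equiv 2\pmod 3$, forcing $\beta=\pm b$ in the representation $p=a^2+3b^2$. Substituting $J(\chi,\psi)=a+\beta\sqrt{-3}$ and its conjugate into the expression for $S(D)$ and running through the six possibilities for $\bigl(\chi(D),\psi(D)\bigr)$ gives the four stated outcomes: $\chi(D)=\psi(D)=1$ (sextic residue) yields $S(D)=2a$; $\psi(D)=1,\ \chi(D)=-1$ (cube, not square) yields $-2a$; $\chi(D)=1$ with $\psi(D)$ a primitive cube root (square, not cube) yields $-a\pm 3b$; and the remaining case (neither) yields $a\pm 3b$, where the sign tracks which primitive cube root $\psi(D)$ is. Adding $p+1$ reproduces every line of the theorem. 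The main obstacle is precisely this Jacobi-sum evaluation: identifying $J(\chi,\psi)$ as a specific element of $\mathbb{Z}[\omega]$ with the correct normalization $a\equiv 2\pmod 3$, and carefully bookkeeping the six twists so that the real part $a$ and imaginary part $3b$ attach to the correct residue classes of $D$ with the correct signs.
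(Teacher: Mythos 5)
Your outline is correct, but be aware that the paper does not prove this theorem at all: its entire ``proof'' is the citation to Ireland and Rosen \cite{iros}, and what you have written is in essence a compressed version of the argument given there for the curve $y^2=x^3+D$. Your steps --- the count $\#E(\mathbb{F}_p)=p+1+\sum_x\chi(x^3+D)$, the vanishing of the sum for $p\equiv 2\pmod 3$ because cubing is a bijection, the identity $\#\{x:x^3=t\}=1+\psi(t)+\psi^2(t)$ for $p\equiv 1\pmod 3$, the scaling and reflection that yield $S(D)=\chi(D)\bigl(\psi(D)J(\chi,\psi)+\psi^2(D)\overline{J(\chi,\psi)}\bigr)$, and the six-case bookkeeping --- are exactly the classical ones, and the bookkeeping checks out: writing $J(\chi,\psi)=a+\beta\sqrt{-3}$ with $\beta=\pm b$, one gets $2\operatorname{Re}(J)=2a$ for sextic residues, $-2a$ for cubes that are nonsquares, $2\operatorname{Re}(\omega^{\pm1}J)=-a\mp 3\beta$ for squares that are noncubes, and $a\pm 3\beta$ in the remaining case, reproducing every line of the statement. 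The only step you invoke rather than prove is the one you yourself flag as the crux: that $J(\chi,\psi)$ has \emph{integer} coordinates in the basis $\{1,\sqrt{-3}\}$ and real part $a\equiv 2\pmod 3$. This is not automatic from $|J(\chi,\psi)|^2=p$ alone, since an element of $\mathbb{Z}[\omega]$ of norm $p$ may have half-integer coordinates (e.g.\ $1+3\omega=-\tfrac12+\tfrac32\sqrt{-3}$ has norm $7$); one genuinely needs a congruence argument, for instance via the identity $J(\chi,\psi)=\psi(4)\,J(\psi,\psi)$ combined with $J(\psi,\psi)\equiv-1\pmod 3$ in $\mathbb{Z}[\omega]$, and that normalization lemma is precisely what Ireland and Rosen establish before deducing the theorem. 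With that lemma supplied, your proof is complete, and it is the same proof the paper delegates to its reference.
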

\begin{proof}
    See ~\cite[p. 305, Thm. 4]{iros}.
\end{proof}

\subsection{Weil's Theorem and Quadratic Character Sums}

We first state Weil's theorem for the estimation of character sums; we require this result for the proof of Lemma~\ref{lidl-5.63}.

\begin{thm}[Weil]
\label{weil-estimate}
Let $\chi$ be an $n^{th}$ order non-trivial multiplicative character in the finite field $\mathbb{F}_q$. Let $f(x)$ be a degree $d$ polynomial in $\mathbb{F}_q$ such that $f(x)\neq k{g(x)}^n$ for any polynomial $g(x)$ and constant $k$ in $\mathbb{F}_q$. Then
\begin{equation}
    \abs{\sum_{x\in\mathbb{F}_q}\chi (f(x))}\ \leq\ (d-1)\sqrt{q}
.\end{equation}
\end{thm}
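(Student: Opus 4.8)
This is Weil's celebrated estimate, and the standard route is to reinterpret the multiplicative character sum as a point count on an auxiliary curve and then invoke the Riemann hypothesis for curves over finite fields (the Hasse--Weil bound). The plan is as follows.

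First I would introduce the superelliptic (Kummer) curve
\[ C \colon \quad y^{n} \ = \ f(x) \]
over $\mathbb{F}_{q}$, where $n$ is the order of $\chi$. The bridge between the two settings is the orthogonality relation for multiplicative characters: for $a \in \mathbb{F}_{q}^{\times}$ the number of $y \in \mathbb{F}_{q}$ with $y^{n} = a$ equals $\sum_{j=0}^{n-1} \chi^{j}(a)$. Summing this over all $x \in \mathbb{F}_{q}$, and using the convention $\chi(0)=0$ so that the zeros of $f$ contribute correctly, the affine $\mathbb{F}_{q}$-point count collapses to
\[ \#C(\mathbb{F}_{q}) \ = \ q \ + \ \sum_{j=1}^{n-1} \ \sum_{x \in \mathbb{F}_{q}} \chi^{j}\bigl(f(x)\bigr). \]
The sum I actually want is the $j=1$ term, which appears here as one of $n-1$ aggregated pieces.

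Next I would pass to the smooth projective model $\overline{C}$, which changes the count only by the bounded number of points at infinity, and apply the Hasse--Weil bound $\left| \#\overline{C}(\mathbb{F}_{q}) - (q+1) \right| \le 2g\sqrt{q}$, where $g$ is the genus. To make this effective I would compute $g$ by Riemann--Hurwitz for the degree-$n$ cover $\overline{C} \to \mathbb{P}^{1}$, whose ramification is dictated by the roots of $f$ and the behaviour at infinity. The hypothesis that $f$ is not a constant times an $n$-th power of a polynomial is precisely what forces $C$ to be geometrically irreducible, so that the bound is applicable.

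The hard part will be isolating the single sum $\sum_{x} \chi(f(x))$ from the aggregate and extracting the sharp constant $d-1$ rather than a quantity governed by the genus. The clean way is to decompose the Jacobian of $\overline{C}$ (equivalently its first cohomology) into eigenspaces for the order-$n$ automorphism $y \mapsto \zeta_{n} y$: each power $\chi^{j}$ corresponds to one eigenspace, every Frobenius eigenvalue on such a piece has absolute value $\sqrt{q}$ by the Riemann hypothesis, and counting the dimension of the eigenspace attached to $\chi$ yields exactly $d-1$ such eigenvalues, giving $\left| \sum_{x} \chi(f(x)) \right| \le (d-1)\sqrt{q}$. A self-contained alternative that avoids cohomology altogether is Stepanov's auxiliary-polynomial method, which constructs a low-degree polynomial vanishing to high order at the many points where $f$ is an $n$-th power and balances the number of such zeros against a degree bound; it reaches the same estimate, with the optimal constant requiring the most careful bookkeeping. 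Since this is a standard foundational result, in practice I would simply cite it (for instance from Lidl--Niederreiter) rather than reproduce either proof.
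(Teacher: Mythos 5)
The paper does not actually prove this result: its ``proof'' is a one-line citation to Iwaniec--Kowalski, Theorem~11.23, which is exactly the course of action you recommend in your final sentence. Your sketch of the underlying argument (the Kummer curve $y^n=f(x)$, orthogonality of characters, the Hasse--Weil bound, and the eigenspace decomposition of the Jacobian under $y\mapsto\zeta_n y$ to isolate the $\chi$-piece and extract the constant) is the standard and correct route, with only the small caveat that the sharp constant $d-1$ is really controlled by the number of distinct roots of $f$ (the eigenspace dimension is in general at most $d-1$), which suffices here since that number is at most $\deg f$.
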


\begin{proof}
    c.f.~\cite[Thm. 11.23]{iwaniec2004analytic}
\end{proof}

Lemma~\ref{lidl-5.63} is needed in the proof of Lemma~\ref{lidl-5.64}.

\begin{lem}[{\cite[Ex.~5.63]{lidl1997finite}}]
\label{lidl-5.63}
    Let $a_1, \ldots, a_k$ be distinct elements of $\mathbb{F}_q$, $q$ odd, and let $\epsilon_1, \ldots, \epsilon_k$ be $k$ given integers, each of which is 1 or -1. Let $N(\epsilon_1, \ldots, \epsilon_k)$ denote the number of $c \in \mathbb{F}_q$ with $\eta(c+a_j)=\epsilon_j$ for $1 \le j \le k$, where $\eta$ is the quadratic character of $\mathbb{F}_q$. Then
    \begin{equation}
        N(\epsilon_1, \dots, \epsilon_k) \ = \  
        \frac{1}{2^k} \sum_{c\in \mathbb{F}_q}
        \left[1+\epsilon_1 \eta(c+a_1)\right] \cdots \left[1+\epsilon_k \eta(c+a_k)\right] - A
    ,\end{equation}
    where $0 \le A \le k/2$.
\end{lem}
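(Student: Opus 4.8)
The plan is to evaluate each factor $1 + \epsilon_j \eta(c+a_j)$ pointwise and then sum over $c \in \mathbb{F}_q$, tracking the exact discrepancy between that sum and $2^k N(\epsilon_1,\ldots,\epsilon_k)$. First I would observe that, since $\epsilon_j = \pm 1$ and hence $\epsilon_j^2 = 1$, for a fixed $c$ and $j$ the factor takes exactly one of three values:
\[
1 + \epsilon_j \eta(c+a_j) \ = \
\begin{cases}
2 & \text{if } \eta(c+a_j) = \epsilon_j, \\
0 & \text{if } \eta(c+a_j) = -\epsilon_j, \\
1 & \text{if } c + a_j = 0.
\end{cases}
\]
The crucial structural fact is that the $a_j$ are distinct, so the values $-a_j$ are distinct; therefore, for each fixed $c$ at most one index $j$ satisfies $c + a_j = 0$. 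Consequently at most one factor in the product can equal $1$, while every remaining factor is $2$ or $0$.

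Next I would split the sum over $c$ into two parts. For those $c$ with $c \neq -a_j$ for every $j$, the product $\prod_{j=1}^k (1 + \epsilon_j \eta(c+a_j))$ equals $2^k$ when $\eta(c+a_j) = \epsilon_j$ for all $j$ and vanishes otherwise; since any $c$ with $\eta(c+a_j) = \epsilon_j = \pm 1$ automatically has $c + a_j \neq 0$, these $c$ are precisely the ones counted by $N(\epsilon_1,\ldots,\epsilon_k)$, contributing $2^k N(\epsilon_1,\ldots,\epsilon_k)$ in total. For each of the $k$ exceptional values $c = -a_{j_0}$, the $j_0$-th factor is $1$, so the product equals $2^{k-1}$ exactly when $\eta(a_j - a_{j_0}) = \epsilon_j$ for all $j \neq j_0$ and is $0$ otherwise; importantly, none of these exceptional $c$ are counted by $N$, because $\eta(c+a_{j_0}) = \eta(0) = 0 \neq \epsilon_{j_0}$.

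Combining the two contributions gives
\[
\sum_{c \in \mathbb{F}_q} \prod_{j=1}^k (1 + \epsilon_j \eta(c+a_j)) \ = \ 2^k N(\epsilon_1,\ldots,\epsilon_k) + 2^{k-1} \sum_{j_0=1}^k \delta_{j_0},
\]
where $\delta_{j_0} \in \{0,1\}$ indicates whether $\eta(a_j - a_{j_0}) = \epsilon_j$ holds for all $j \neq j_0$. Dividing by $2^k$ and rearranging isolates $N(\epsilon_1,\ldots,\epsilon_k)$ with a correction term $A = \tfrac{1}{2}\sum_{j_0=1}^k \delta_{j_0}$. Finally I would bound $A$: since each $\delta_{j_0}$ is $0$ or $1$ and there are $k$ of them, we get $0 \le A \le k/2$, as claimed.

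I expect no serious obstacle here; the only point requiring genuine care is the bookkeeping of the exceptional terms $c = -a_{j_0}$, where the identity $\eta(0) = 0$ converts a would-be factor of $0$ or $2$ into a $1$ and thereby generates the half-integer correction $A$. Making the distinctness of the $a_j$ explicit—so that these exceptional factors never overlap for a single $c$—is exactly what keeps the correction clean and guarantees the stated range for $A$.
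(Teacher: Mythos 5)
Your proposal is correct and follows essentially the same route as the paper's proof: evaluate each factor $1+\epsilon_j\eta(c+a_j)$ as $2$, $0$, or $1$ (the last only when $c=-a_j$), use distinctness of the $a_j$ to see that at most one factor per $c$ can be $1$, and collect the resulting $2^{k-1}$ contributions into the correction term $A\in[0,k/2]$. Your write-up is in fact a bit more explicit than the paper's in identifying $A=\tfrac12\sum_{j_0}\delta_{j_0}$, but the underlying argument is identical.
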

\begin{proof}
    See Lemma \ref{lidl-5.63-pf} of Appendix \ref{sec:applegsums}.
\end{proof}

The final result we present in this section is necessary to prove the existence of $k$-Diophantine $m$-tuples in Subsection~\ref{existence}.

\begin{lem}[{\cite[Ex.~5.64]{lidl1997finite}}]
\label{lidl-5.64}
We have
    \begin{equation}
        \left| N(\epsilon_1, \ldots, \epsilon_k) - \frac{q}{2^k}\right| \ \le\
        \left(\frac{k-2}{2}+\frac{1}{2^k}\right)q^{1/2} + \frac{k}{2}
    .\end{equation}
\end{lem}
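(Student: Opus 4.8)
The plan is to start from the exact expression for $N(\epsilon_1,\ldots,\epsilon_k)$ furnished by Lemma~\ref{lidl-5.63}, namely
\begin{equation}
    N(\epsilon_1,\ldots,\epsilon_k)\ =\ \frac{1}{2^k}\sum_{c\in\mathbb{F}_q}\prod_{j=1}^k\bigl[1+\epsilon_j\eta(c+a_j)\bigr]-A,\qquad 0\le A\le \frac{k}{2},
\end{equation}
and to estimate the main term by expanding the product multiplicatively. First I would expand $\prod_{j=1}^k[1+\epsilon_j\eta(c+a_j)]$ as a sum over subsets $T\subseteq\{1,\ldots,k\}$, writing it as $\sum_{T}\bigl(\prod_{j\in T}\epsilon_j\bigr)\,\eta\bigl(\prod_{j\in T}(c+a_j)\bigr)$, and then interchange the sums over $c$ and $T$. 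The empty subset contributes $\sum_{c\in\mathbb{F}_q}1=q$, which is precisely the quantity $q/2^k$ we are comparing against after dividing by $2^k$; every nonempty subset contributes an inner character sum $\sum_{c\in\mathbb{F}_q}\eta\bigl(\prod_{j\in T}(c+a_j)\bigr)$ that I want to bound.

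The key step is to apply Weil's bound (Theorem~\ref{weil-estimate}) to each nonempty subset with $\chi=\eta$, the quadratic character, so $n=2$. For this I must check the hypothesis that $f_T(x)=\prod_{j\in T}(x+a_j)$ is not of the form $k\,g(x)^2$. Since the $a_j$ are \emph{distinct}, the polynomial $f_T$ has $|T|$ distinct roots, hence is squarefree and of degree $|T|\ge 1$; a constant times a perfect square would require every root to occur with even multiplicity, which is impossible here. Thus Weil applies and gives $\bigl|\sum_{c}\eta(f_T(c))\bigr|\le(|T|-1)\sqrt{q}$. Collecting these estimates yields
\begin{equation}
    \left|\sum_{c\in\mathbb{F}_q}\prod_{j=1}^k\bigl[1+\epsilon_j\eta(c+a_j)\bigr]-q\right|\ \le\ \sum_{t=1}^k\binom{k}{t}(t-1)\,\sqrt{q}.
\end{equation}

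Next I would carry out the combinatorial computation $\sum_{t=1}^k\binom{k}{t}(t-1)=k\,2^{k-1}-2^k+1$, using $\sum_t t\binom{k}{t}=k\,2^{k-1}$ and $\sum_t\binom{k}{t}=2^k$ and correcting for the omitted $t=0$ term. Dividing the displayed inequality by $2^k$ then gives
\begin{equation}
    \left|\frac{1}{2^k}\sum_{c\in\mathbb{F}_q}\prod_{j=1}^k\bigl[1+\epsilon_j\eta(c+a_j)\bigr]-\frac{q}{2^k}\right|\ \le\ \left(\frac{k}{2}-1+\frac{1}{2^k}\right)q^{1/2}\ =\ \left(\frac{k-2}{2}+\frac{1}{2^k}\right)q^{1/2}.
\end{equation}
Finally, subtracting $A$ and applying the triangle inequality together with the bound $0\le A\le k/2$ from Lemma~\ref{lidl-5.63} produces the claimed estimate. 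I expect the only real subtlety to be the verification that each $f_T$ satisfies Weil's squarefreeness hypothesis (where distinctness of the $a_j$ is essential), while the remaining work is the bookkeeping of the binomial sum and the triangle inequality; these are routine once the expansion is set up.
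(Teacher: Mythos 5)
Your proposal is correct and follows essentially the same route as the paper's proof: expand the product from Lemma~\ref{lidl-5.63} over subsets, isolate the empty-subset term $q$, bound each character sum from a nonempty subset by Weil's theorem, and evaluate the resulting binomial sum before dividing by $2^k$ and absorbing $A$. If anything you are more careful than the paper, which omits the squarefreeness check for Weil's hypothesis and contains a typo in the intermediate identity (it writes $k2^{k-1}-2^{k-1}$ where the correct value is $k2^{k-1}-2^k+1$, since $\sum_{d=1}^{k}\binom{k}{d}=2^k-1$), although its final bound agrees with yours.
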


\begin{proof}
    See Lemma \ref{lidl-5.64-pf} of Appendix \ref{sec:applegsums}.
\end{proof}

We now proceed to the main results of this paper. First, we will prove that for all sufficiently large odd primes $p$, there exists at least one $k$-Diophantine $m$-tuple in $\mathbb{F}_p$.

\section{Proofs of the main results}\label{results}

\subsection{Existence of \texorpdfstring{$k$}{k}-Diophantine \texorpdfstring{$m$}{m}-tuples} \label{existence}

Here, we prove that $k$-Diophantine $m$-tuples exist for a large enough prime. First, we establish the case when $k=3$.

\begin{thm}
    Let $m \ge 3$ be an integer. If $p>2^{m^2-m-2}{(m^2+3m+4)}^2$ is a prime, then there exists at least one 3-Diophantine $m$-tuple in $\mathbb{F}_{p}$.
\end{thm}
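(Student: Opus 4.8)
The plan is to argue by induction on $m$, extending a 3-Diophantine $(m-1)$-tuple to a 3-Diophantine $m$-tuple by appending a single carefully chosen element, and to locate that element using Lemma~\ref{lidl-5.64}. For the base case I would take $m=2$: since there are no $3$-element subsets, any pair of distinct nonzero residues (say $\{1,2\}$, valid as $p$ is enormous) is vacuously a 3-Diophantine 2-tuple with nonzero, distinct entries. For the inductive step, suppose $\{a_1,\dots,a_{m-1}\}\subseteq\mathbb{F}_p^{\times}$ is a 3-Diophantine $(m-1)$-tuple of distinct nonzero elements. Appending a new element $c$ creates exactly the $\binom{m-1}{2}$ new conditions that $1+a_ia_jc$ be a nonzero square for $1\le i<j\le m-1$ (every triple avoiding $c$ already satisfies the condition by hypothesis), so it suffices to produce a single such $c$, distinct from $a_1,\dots,a_{m-1}$ and nonzero.

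The key step is to convert these multiplicative conditions into the additive, shifted form that Lemma~\ref{lidl-5.64} governs. Writing $\eta$ for the quadratic character and using $a_ia_j\ne 0$, one has
$$\eta\bigl(1+a_ia_jc\bigr)\ =\ \eta(a_ia_j)\,\eta\bigl(c+(a_ia_j)^{-1}\bigr),$$
so $1+a_ia_jc$ is a nonzero square precisely when $\eta\bigl(c+(a_ia_j)^{-1}\bigr)=\eta(a_ia_j)$. Thus each pair $\{i,j\}$ contributes a shift $\alpha_{ij}=(a_ia_j)^{-1}$ and a prescribed sign $\epsilon_{ij}=\eta(a_ia_j)$. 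Distinct products $a_ia_j$ give distinct shifts, while two pairs with equal products produce the same shift \emph{and} the same sign (as $\eta$ depends only on the product), so they collapse to one identical condition rather than an incompatible pair. After deleting duplicates I obtain $\kappa\le\binom{m-1}{2}\le\binom{m}{2}$ conditions on $c$ with \emph{distinct} shifts, exactly the hypothesis of Lemma~\ref{lidl-5.64}. That lemma bounds the number of $c$ realizing the required sign pattern below by $\tfrac{p}{2^{\kappa}}-\bigl(\tfrac{\kappa-2}{2}+\tfrac{1}{2^{\kappa}}\bigr)p^{1/2}-\tfrac{\kappa}{2}$; discarding the at most $m$ forbidden values $c=0$ and $c=a_i$ leaves at least
$$\frac{p}{2^{\kappa}}-\Bigl(\frac{\kappa-2}{2}+\frac{1}{2^{\kappa}}\Bigr)p^{1/2}-\frac{\kappa}{2}-m$$
admissible choices of $c$.

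It remains to check that the hypothesis on $p$ forces this quantity to be positive, uniformly over every extension step $j\le m$ (where $\kappa=\binom{j-1}{2}$). Setting $K=\binom{m}{2}$ and $B=\tfrac{K}{2}+m+1$, one verifies $\tfrac{\kappa-2}{2}+\tfrac{1}{2^{\kappa}}<\tfrac{\kappa}{2}\le\tfrac{K}{2}<B$ and $\tfrac{\kappa}{2}+m<B$ and $\tfrac{p}{2^{\kappa}}\ge\tfrac{p}{2^{K}}$, so with $x=p^{1/2}$ the displayed count exceeds $\tfrac{x^2}{2^{K}}-Bx-B\ge\tfrac{x^2}{2^{K}}-2Bx$, which is positive as soon as $x>2^{K+1}B$, i.e. $p>4^{K+1}B^2$. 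A direct rearrangement gives $4^{K+1}B^2=2^{m^2-m-2}(m^2+3m+4)^2$, exactly the stated bound, so each step succeeds and the induction closes. I expect the main obstacle to be precisely this multiplicative-to-additive reduction together with the duplicate-product bookkeeping: one must ensure that coincidences $a_ia_j=a_{i'}a_{j'}$ never impose contradictory sign requirements (they do not, as noted) so that Lemma~\ref{lidl-5.64} applies with genuinely distinct shifts; the remaining threshold estimate is routine once $B$ is chosen to dominate both error constants.
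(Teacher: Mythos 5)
Your proof is correct and follows essentially the same route as the paper's: induction on $m$, conversion of the $\binom{m-1}{2}$ new square conditions on the appended element $c$ into sign conditions $\eta\bigl(c+(a_ia_j)^{-1}\bigr)=\eta(a_ia_j)$, an application of Lemma~\ref{lidl-5.64}, and the threshold computation $p>4^{K+1}B^2=2^{m^2-m-2}(m^2+3m+4)^2$ with $K=\binom{m}{2}$ and $B=\frac{K}{2}+m+1$. The one point where you are more careful than the paper is in observing that coincident products $a_ia_j=a_{i'}a_{j'}$ produce identical rather than conflicting conditions, so the shifts fed into Lemma~\ref{lidl-5.64} can indeed be taken distinct (and fewer conditions only improves the bound) --- a hypothesis of that lemma which the paper applies without comment.
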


\begin{proof}
    We prove this theorem by induction on $m$. For $m=3$ and a prime $p$ such that 
    \begin{equation}
        p\ >\ 2^{3^2-3-2}{(3^2+3(3)+4)}^2\ = \  7744
    ,\end{equation}
    we have the 3-Diophantine triple $\{2,3,4\}$ in $\mathbb{F}_p$. Indeed, $p\ge 5$ is large enough to guarantee the existence of this 3-Diophantine triple.
    Suppose that there exists at least one $3$-Diophantine $m$-tuple in $\mathbb{F}_{p}$.
    Now, we want to prove that there exists a 3-Diophantine $(m+1)$-tuple in $\mathbb{F}_{p}$ where $p$ is a prime such that $p>2^{m^2+m-2}{(m^2+5m+8)}^2$. Let us take a prime $p$ such that 
    \begin{align*}
    p &\ > \ 2^{(m+1)^2-(m+1)-2}\{(m+1)^2+3(m+1)+4)\}^2\\ 
      &\ = \   2^{m^2+m-2}{(m^2+5m+8)}^2.    
    \end{align*}
    Clearly, $p>2^{m^2-m-2}{(m^2+3m+4)}^2$.  Thus, by the induction hypothesis, there exists a 3-Diophantine $m$-tuple $\{a_1, a_2, \ldots, a_m\}$ in $\mathbb{F}_p$. Define \begin{equation}
        g\ \coloneqq\ \#\left\{x \in \mathbb{F}_p: \left(\frac{a_{i}a_{j}x+1}{p}\right)\ = \  1 \text{ where } i,j \in \mathbb{Z}, 1\leq i<j\leq m  \right\} \ = \  
        \#\left\{x \in \mathbb{F}_p : \left(\frac{x + \overline{a_{i}a_{j}}}{p}\right)\ = \  \left(\frac{\overline{a_{i}} \overline{a_{j}}}{p}\right)\right\}
    \end{equation}
     for all $i$, $j$ such that $1\leq i<j\leq m$, where $\overline{a_i}$ denotes the multiplicative inverse of $a_i$ in $\mathbb{F}_p$. We will prove that $g-(m+1)>0$, which guarantees that there exists $x \in \mathbb{F}_p, x \not\in \{0, a_1, \ldots, a_m\}$ such that $\left(\frac{a_{i}a_{j}x+1}{p}\right)=1$ with $1\leq i<j\leq m$. By choosing pairs in $\mathbb{F}_p$ in ${m \choose 2}$ ways and using Lemma \ref{lidl-5.64},
    \begin{align*}
    \abs {g-\frac{p}{2^{{m \choose 2}}}} &\ \le\ \left\{\frac{{m \choose 2} - 2}{2}+\frac{1}{2^{m \choose 2}}\right\}\sqrt{p}+\frac{{m \choose 2}}{2} \\
    g &\ \ge\  \frac{p}{2^{{m \choose 2}}}- \left\{\frac{{m \choose 2} - 2}{2}+\frac{1}{2^{m \choose 2}}\right\}\sqrt{p}-\frac{{m \choose 2}}{2}\\
    &\ \ge\ \frac{p}{2^{\frac{m(m-1)}{2}}}- \left(\frac{m(m-1)-4}{4}+\frac{1}{2^{\frac{m(m-1)}{2}}}\right)\sqrt{p}-\frac{m(m-1)}{4}.
    \end{align*}
    Since
    \begin{align*}
        &\left(\frac{m(m-1)}{4}-1+\frac{1}{2^{\frac{m(m-1)}{2}}}\right)\sqrt{p}+\frac{m(m-1)}{4}+m+1 \\
        &< \  \left(\frac{m^2-m}{4}-1+\frac{1}{2^{\frac{m(m-1)}{2}}}+\frac{1}{2^{\frac{m(m-1)}{2}+1}}\right)\sqrt{p} \\
        &=  \ \left(\frac{m^2-m}{4}-1+\frac{3}{2^{\frac{m(m-1)}{2}+1}}\right)\sqrt{p} \\
        &< \ \frac{m(m-1)\sqrt{p}}{4} \ <\ \frac{p}{2^{\frac{m(m-1)}{2}}},
    \end{align*}
    we find, $g>m+1$. So, there exists a 3-Diophantine $(m+1)$-tuple $\{a_1,\ldots, a_m, x\}$ in $\mathbb{F}_p$.
\end{proof}

We now consider the same question for arbitrary $k$.
\existence*
\begin{proof}
    We first prove the existence of a $k$-Diophantine $k$ tuple in $\mathbb{F}_p$ by using induction on $k$. Then we proceed to prove the theorem by using induction on $m$. The base case in the induction process of $m$ is the case when $m=k$ i.e the existence of a $k$-Diophantine $k$-tuple which we would have already proved. 
    \par
    Now, we prove that there exists a $k$-Diophantine $k$-tuple for $p>4^k(3k+2)^2$.
    We prove this result by induction on $k \ge 2$.
    For $p>1024$ and $k=2$, we get the Diophantine pair $\{1,3\}$ in $\mathbb{F}_p$.\par
    Assume the statement holds for $k \ge 2$. We consider a prime $p>4^{k+1} (3k+5)^2$. Since $p>4^k(3k+2)^2$, there exists a $k$-Diophantine $k$-tuple $\{a_1,a_2, \dots, a_k\}$ in $\mathbb{F}_p$. Let 
    \begin{equation}
        g\ \coloneqq\ \#\left\{x \in \mathbb{F}_p : \left(\frac{a_{1}a_{2}\dots a_{k} x+1}{p}\right)=1\right\}
    .\end{equation}
    Let $\overline{a_i}$ denote the multiplicative inverse of $a_i$ in $\mathbb{F}_p$. 
    By Lemma \ref{lidl-5.64},
    \begin{align*}
        g &\ = \   \#\left\{x \in \mathbb{F}_p : \left(\frac{x + \overline{a_{1}a_{2}\dots a_{k}}}{p}\right)\ = \  \left(\frac{\overline{a_{1}a_{2}\dots a_{k}}}{p}\right)\right\} \\
        &\ \ge\  \frac{p}{2^{\binom{k}{k}}} - \left( \frac{\binom{k}{k}-2}{2} + \frac{1}{2^{\binom{k}{k}}} \right)\sqrt{p} - \frac{\binom{k}{k}}{2}\\
        &\ =\  \frac{p-1}{2}\\
        & \ >\  k+1 \text{ for } p>4^{k+1} (3k+5)^2. 
    \end{align*}
    So, there exists at least one $x \in \mathbb{F}_p$ such that $\left(\frac{a_{1}a_{2}\dots a_{k} x+1}{p}\right)=1$ and hence we get a $(k+1)$-Diophantine $(k+1)$-tuple $\{a_1,a_2, \dots, a_k,x\}$ in $\mathbb{F}_p$. Thus, there exists a $k$-Diophantine $k$-tuple in $\mathbb{F}_p$ where $p>4^k(3k+2)^2$ and $k \ge 2$. Therefore, the base case for the induction proof holds.

    Let us now assume there exists at least one $k$-Diophantine $m$-tuple in $\mathbb{F}_p$ for $p>4^{\binom{m}{k-1} +1} \left(\frac{\binom{m}{k-1}}{2} + m+1\right)^2$. Now, we want to prove that there exists a $k$-Diophantine $(m+1)$-tuple in $\mathbb{F}_{p}$ where $p$ is a prime such that $p>4^{\binom{m+1}{k-1} +1} \left(\frac{\binom{m+1}{k-1}}{2} + m+2\right)^2$. By the induction hypothesis, since
    $p>4^{\binom{m}{k-1} +1} \left(\frac{\binom{m}{k-1}}{2} + m+1\right)^2$, there exists a $k$-Diophantine $m$-tuple $\{a_1, a_2, \ldots, a_m\}$ in $\mathbb{F}_p$. Define 
    \begin{equation}
        g\ \coloneqq\ \#\left\{x \in \mathbb{F}_p : \left(\frac{a_{i_1}a_{i_2}\dots a_{i_{k-1}}x+1}{p}\right)=1\right\}
    \end{equation} where $a_{i_1}, a_{i_2}, \ldots ,a_{i_{k-1}} \in \{a_1, a_2, \ldots, a_m\}$.
    Let $\overline{a_i}$ denote the multiplicative inverse of $a_i$ in $\mathbb{F}_p$.\\
    By Lemma \ref{lidl-5.64},
    \begin{align*}
        g &\ = \   \#\left\{x \in \mathbb{F}_p : \left(\frac{x + \overline{a_{i_1}a_{i_2}\dots a_{i_{k-1}}}}{p}\right)=\left(\frac{\overline{a_{i_1}a_{i_2}\dots a_{i_{k-1}}}}{p}\right)\right\} \\
        &\ \ge\ \frac{p}{2^{m \choose k-1}}- \left(\frac{{m \choose k-1}-2}{2}+\frac{1}{2^{m \choose k-1}}\right)\sqrt{p}-\frac{{m \choose k-1}}{2}.
    \end{align*}
    Now, we also see that $p>4^{\binom{m}{k-1} +1} \left(\frac{\binom{m}{k-1}}{2} + m+1\right)^2$ gives
    \begin{equation}\label{eqn:3.1}
        \frac{{m \choose k-1}}{2}+m+1\ <\ \frac{\sqrt{p}}{2^{{m \choose k-1} +1}}.
    \end{equation}
    Using \eqref{eqn:3.1}, we get
    \begin{align*}
        &\left(\frac{{m \choose k-1}-2}{2}+\frac{1}{2^{m \choose k-1}}\right)\sqrt{p} + \frac{{m \choose k-1}}{2}+m+1 \\
        &< \ \left(\frac{{m \choose k-1}}{2} - 1 +\frac{1}{2^{m \choose k-1}} + \frac{1}{2^{{m \choose k-1} +1}}\right)\sqrt{p} \\
        &= \ \left(\frac{{m \choose k-1}}{2} - 1 + \frac{3}{2^{{m \choose k-1} +1}}\right)\sqrt{p}\\
        &< \ \frac{{m \choose k-1}}{2}\sqrt{p}\ <\ \frac{p}{2^{m \choose k-1}}.
    \end{align*}
    Hence, we have $g>m+1$. Thus, there exists an $x \in \mathbb{F}_p, x \not\in \{0, a_1, \ldots, a_m\}$ such that $$\left(\frac{a_{i_1}a_{i_2}\dots a_{i_{k-1}}x+1}{p}\right)\ = \  1$$ where $a_{i_1}, a_{i_2}, \ldots ,a_{i_{k-1}} \in \{a_1, a_2, \ldots, a_m\}$. So, there exists a $k$-Diophantine $(m+1)$-tuple $\{a_1,\ldots, a_m, x\}$ in $\mathbb{F}_p$.
\end{proof}

\subsection{Counting 3-Diophantine Triples}

A natural question to ask is exactly how many such $k$-Diophantine $m$-tuples exist for a given $(k,m)$. The following result gives an answer for a special case.

\num*

Indeed, when we compare this formula with the results obtained computationally, we see trends in Table~\ref{table:1} and Figure~\ref{figure:num_graph} (in Appendix \ref{sec:tablesgraphs}) that give us some initial confidence in the formula's accuracy.

However, before giving the proof, we need some other results.

\subsubsection{Counting Problems}

In this subsection, we provide some lemmas that are needed to prove Theorem \ref{thm:num-3-Dio}. 

\begin{lem}
We have
    \begin{equation} 
    \#\left\{(a,b) \in \mathbb{F}_p^2 : a\neq b, ab+1 = 0 \pmod p \right\} \ = \  
        \begin{cases}
            p-3, &\emph{if } p \equiv 1 \pmod{4}\\
            p-1, &\emph{if } p \equiv 3 \pmod{4}.
        \end{cases}
    \end{equation}
\end{lem}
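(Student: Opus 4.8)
The plan is to count all solutions of $ab \equiv -1 \pmod p$ first, ignoring the constraint $a \neq b$, and then subtract the diagonal contributions. First I would observe that $ab + 1 \equiv 0 \pmod p$ forces $a \not\equiv 0$ (otherwise $1 \equiv 0$), so that $a$ ranges freely over the $p-1$ nonzero elements of $\mathbb{F}_p$, and for each such $a$ the value $b$ is uniquely determined as $b = -\overline{a} = -a^{-1}$. Hence there are exactly $p-1$ pairs $(a,b) \in \mathbb{F}_p^2$ satisfying $ab + 1 \equiv 0$, with no restriction imposed on whether $a = b$.

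Next I would subtract the solutions lying on the diagonal $a = b$. Such a solution satisfies $a^2 \equiv -1 \pmod p$, so the number of diagonal solutions equals the number of square roots of $-1$ in $\mathbb{F}_p$. By Euler's criterion (equivalently, the first supplement to quadratic reciprocity), $\left(\frac{-1}{p}\right) = (-1)^{(p-1)/2}$, so $-1$ is a quadratic residue precisely when $p \equiv 1 \pmod 4$. In that case $-1$ has exactly two square roots, namely $\pm a_0$, and when $p \equiv 3 \pmod 4$ it has none. A small point worth confirming is that when square roots exist there are exactly two (not one): since $p$ is odd we have $a_0 \not\equiv -a_0 \pmod p$, so the two roots are genuinely distinct.

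Subtracting the diagonal count from the total $p-1$ then yields $p - 1 - 2 = p - 3$ when $p \equiv 1 \pmod 4$ and $p - 1 - 0 = p - 1$ when $p \equiv 3 \pmod 4$, which is exactly the claimed formula. The argument has essentially no obstacle: beyond the elementary bijection $a \mapsto (a, -a^{-1})$, the only genuine input is the standard determination of when $-1$ is a quadratic residue modulo $p$, so the bulk of the work is simply organizing the inclusion of the diagonal cases correctly.
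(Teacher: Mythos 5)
Your proposal is correct and follows essentially the same route as the paper: count the $p-1$ pairs via the bijection $a \mapsto (a, -a^{-1})$, then use Euler's criterion to determine when the diagonal $a^2 \equiv -1$ contributes. You are in fact slightly more explicit than the paper in verifying that $-1$ has exactly two distinct square roots when $p \equiv 1 \pmod 4$, which is a welcome detail.
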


\begin{proof}
    First, we solve the problem without the condition that $a\neq b$. As $\mathbb{F}_p$ is a field, for each $a \in \mathbb{F}_p \setminus{\{0\}}$ there exists a unique $a^{-1} \in \mathbb{F}_p$ such that $a a^{-1} = 1$. Hence, for each $a \in \mathbb{F}_p$, take $b = -a^{-1}$.  It follows from this definition that $ab \equiv -1 \pmod p$. Since each $b$ is unique for fixed $a$, there are exactly $p-1$ pairs $(a,b) \in \mathbb{F}_p^2$ such that $ab+1=0$.
    
    Now, with the condition $a\neq b$, notice that we need only find the odd primes $p$ for which -1 is a quadratic residue. In other words, we wish to find when $\left(\frac{-1}{p}\right)=1$ where $\left(\frac{a}{b}\right)$ is the Legendre symbol. By Euler's Criterion, this is equivalent to asking when $(-1)^{\frac{p-1}{2}}=1$. This implies that -1 is a quadratic residue modulo $p$ if and only if $\frac{p-1}{2}$ is even. Since $\frac{p-1}{2}$ is even when $p \equiv 1 \pmod 4$ and odd when $p \equiv 3 \pmod 4$, we have  \begin{equation} 
    \#\left\{(a,b) \in \mathbb{F}_p^2 : a\neq b, ab+1 = 0 \pmod p \right\} \ = \  
        \begin{cases}
            p-3, &\mbox{if } p \equiv 1 \pmod{4}\\
            p-1, &\mbox{if } p \equiv 3 \pmod{4}.
        \end{cases}
    \end{equation}
\end{proof}

\cubecount*
\begin{proof}
    Consider fixing $ab=l$. We know that there is a unique choice of $c \in \mathbb{F}_p$ such that
    $lc\equiv -1 \pmod p$, namely $c = -l^{-1}$. Thus, we begin by finding
    $\#\left\{(a,b) \in \mathbb{F}_p^2 : ab\equiv l \pmod p \right\}$ where $l$ runs through the elements of
    $\mathbb{F}_p \setminus{\{0\}}$.
    
We begin by finding $\#\left\{(a,b) \in \mathbb{F}_p^2 : ab\equiv l \pmod p \right\}$. If $l$ is a
    quadratic residue modulo $p$, then there are two pairs $(a,b)$ such that $ab=l$ and $a,b$ are not distinct, for if
    $a=b=x$ is one such pair then $a=b=-x$ is the other pair. So, if $l$ is a quadratic residue, then there are $p-3$ 
    pairs. On the other hand, if $l$ is a quadratic non-residue modulo $p$, then there are $p-1$ pairs such that
    $ab=l$ and $a,b$ are distinct. Thus, we have that
    \begin{align*}
        \#\left\{(a,b) \in \mathbb{F}_p^2 : ab\equiv l \pmod p \right\} &\ =\ (p-1)\frac{p-1}{2} + (p-3)\frac{p-1}{2}\\
        &\ = \   (p-1)(p-2).
    \end{align*}
    However, the theorem statement asks a slightly different question. Now, we must consider when $c=a$ or $c=b$.
    
    \begin{description}

    \item [Case 1. $p\equiv 2 \pmod 3$]
        We want to show that there are $\frac{p+1}{2}$ residues $l$
        for which there are $p-3$ distinct triples $(a,b,c)$ and $\frac{p-3}{2}$ residues $l$ for
        which there are $p-5$ distinct triples $(a,b,c)$. This would imply that, for $p\equiv 2 \pmod 3$,
        \begin{align*}
            \#\left\{(a,b,c) \in \mathbb{F}_p^3 : abc+1 \equiv 0 \pmod p \right\} &\ =\ (p-3)\frac{p+1}{2} + (p-5)\frac{p-3}{2} \\
            &\ = \   (p-2)(p-3).
        \end{align*}
        
        The residues for which there are $p-5$ solutions satisfy $ab\equiv a^2 \equiv l \pmod p$ but not $abc\equiv 
        a^{3}\equiv -1 \pmod p$. In this case, there are two pairs such that $a=b$ and $ab\equiv l \pmod p$, i.e., 
        $(a,a)$ and $(-a,-a)$, and two pairs such that either $b=c$ and $abc\equiv ab^2\equiv -1 \pmod p$ or $a=c$
        and $abc\equiv a^2b \equiv -1$. Hence, we have $p-1-4=p-5$ solutions. We know there are $\frac{p-3}{2}$
        such residues by Euler's Criterion.
    
        The residues, $l$, for which there are $p-3$ solutions either do not satisfy $ab\equiv
        a^2\equiv l \pmod p$ or contain a pair that forms a solution of $abc \equiv a^3
        \equiv -1 \pmod p$ when extended by $c$. First, we restrict ourselves to the quadratic
        non-residues. There exist pairs $(a,c)$ or $(b,c)$, $a\neq b$ such that either $b=c$ and $abc\equiv -1
        \pmod p$ or $a=c$ and $abc\equiv -1$. Since $l$ is a quadratic non-residue, $a\neq b$. We know
        there are $\frac{p-1}{2}$ such quadratic non-residues. Now, for the cubic residues of $-1$, there are
        $q = \gcd(3, p-1)$ solutions to the congruence $x^3 \equiv -1 \pmod p$ by Euler's criterion. For $p\equiv 2 
        \pmod{3}$, $q=1$. Thus, there are $\frac{p-1}{2}+1=\frac{p+1}{2}$ residues with $p-3$ solutions to 
        $abc+1\equiv 0 \pmod p$.\\

    \item [Case 2. $p\equiv 1 \pmod 3$]
        In this case, instead of there being exactly 1 solution to $x^3\equiv -1 \pmod p$, there are
        $q = \gcd(3, p-1) = 3$ solutions. So, there are $\frac{p-1}{2}+3=\frac{p+5}{2}$ residues for
        which there are $p-3$ triples satisfying $abc+1\equiv 0 \pmod p$, and $\frac{p-1}{2}-3=\frac{p-7}{2}$ residues for which there are $p-5$ triples satisfying
        $abc+1\equiv 0 \pmod p$. Thus,
        \begin{align*}
            \#\left\{(a,b,c) \in \mathbb{F}_p^3 : abc+1 \equiv 0 \pmod p \right\} &\ =\ (p-3)\frac{p+5}{2} + (p-5)\frac{p-7}{2} \\
            &\ = \   (p-2)(p-3)+4. \qedhere
        \end{align*}
    \end{description}
    
\end{proof}

Now we present the proof of Theorem \ref{thm:num-3-Dio}. 

\begin{proof}
    We have
    \begin{equation} 12N_3(p)\ = \  \sum\left(1+\left(\frac{abc+1}{p}\right)'\right) \end{equation}
    where the sum is evaluated over non-zero and distinct $a,b,c$,
    and we have defined $\left(\frac{a}{p}\right)'=\left(\frac{a}{p}\right)$ for $a\neq0$ and $\left(\frac{a}{p}\right)'= 1$ for $a=0$. Hence
    \begin{align*}
    12N_3(p)&\ = \  \sum_{c\neq0}\,\,\,\sum_{b\neq0,c}\,\sum_{a\neq0,b,c}1 +\sum_{c\neq0}\,\,\,\sum_{b\neq0,c}\,\sum_{a\neq0,b,c}\left(\frac{abc+1}{p}\right)+ \#\left\{(a,b,c) \in \mathbb{F}_p^3 : abc+1 \equiv 0 \pmod p \right\}. \end{align*}
    The first summand will just be $(p-1)(p-2)(p-3)$, and by Lemma \ref{thm:cubecount} we already have the solution for the final summand. We use Lemmas \ref{lem:lin} and \ref{lem:quad} to evaluate the middle sum:
    \begin{equation}
        \sum_{c\neq0} \,\,\, \sum_{b\neq0,c} \, \sum_{a\neq0,b,c} \left(\frac{abc+1}{p}\right)\ = \  -\sum_{c\neq0}\,\,\,\sum_{b\neq0,c}\left(\frac{1}{p}\right)+\left(\frac{b^2c+1}{p}\right)+\left(\frac{bc^2+1}{p}\right).\end{equation}
    This gives
    \begin{align*}
        -\sum_{c\neq0}\,\,\,\sum_{b\neq0,c}1+\left(\frac{b^2c+1}{p}\right)+\left(\frac{bc^2+1}{p}\right)
        &\ = \ -\sum_{c\neq0}(p-4)-2\left(\frac{c^3+1}{p}\right)\\
        &\ = \ -(p-1)(p-4)+2\sum_{c\neq0}\left(\frac{c^3+1}{p}\right).
    \end{align*}
    Using Theorem \ref{Gauss-Lem}, we can evaluate the sum $\sum_{c\neq0}\left(\frac{c^3+1}{p}\right)$. In particular we get
    
    \begin{align*}
        \sum_{c\neq0}\left(\frac{c^3+1}{p}\right)&\ = \   
    \begin{cases}
       2a-1 & \mbox{for }p\equiv 1 \pmod 3 \\
       -1 & \mbox{for }p\equiv 2 \pmod 3
    \end{cases}
    \\
    \implies\sum_{c\neq0} \,\,\, \sum_{b\neq0,c} \, \sum_{a\neq0,b,c} \left(\frac{abc+1}{p}\right) &\ = \  \begin{cases}
        -(p-2)(p-3)+4a & \mbox{for }p\equiv 1 \pmod 3 \\
       -(p-2)(p-3) & \mbox{for }p\equiv 2 \pmod 3.
    \end{cases}
    \end{align*}
Combining this with the result from Theorem~\ref{thm:cubecount} we get the claim.
\end{proof}

As one might imagine, counting $k$-Diophantine $k$-tuples for a general $k$ is not that simple due to the complexity of the following problem: 
\
\begin{center}
    \emph{What are the total number of $k$-tuples $\{a_1,a_2,\ldots,a_k\}$ such that $a_i$ are all distinct, non-zero and}
\end{center}
\begin{equation}\prod_{i=1}^k a_i +1\ \equiv \ 0\mbox{ mod p}?\end{equation}

\subsection{Asymptotic Formula}

While a general formula of the number of $k$-Diophantine $k$-tuples is difficult, an asymptotic formula is well within reach. We describe the formula in the following result. 

\asymptotic*

\begin{proof}
We know that 
\begin{equation}k!\cdot2\cdot N_{k}(p)\ = \  \sideset{}{'}\sum\left(1+\left(\frac{1+a_{1}a_{2}\ldots a_{k}}{p}\right)' \right)\end{equation}
where the primed sum is taken over distinct and non-zero $a_1,a_2,\ldots,a_k$ and we have defined $\left(\frac{a}{p}\right)'=\left(\frac{a}{p}\right)$ for $a\neq0$ and $\left(\frac{a}{p}\right)'= 1$ for $a=0$ as before. The main term is $\sum 1=(p-1)(p-2)\ldots (p-k)=p^k+o(p^k)$. Now
\begin{equation}\sum\left(\frac{1+a_{1}a_{2}\ldots a_{k}}{p}\right)'\ = \  \sum\left(\frac{1+a_{1}a_{2}\ldots a_{k}}{p}\right)+\#\left\{(a_1,a_2,\ldots,a_k) \in \mathbb{F}_p^k : \prod_{i=1}^{k}a_i+1 \equiv 0 \pmod p \right\}.\end{equation}
Using Weil's estimate for character sums (Theorem~ \ref{weil-estimate}), we note that
\begin{equation}\sum\left(\frac{1+a_{1}a_{2}\ldots a_{k}}{p}\right)\ \leq\ p^{k-1}\sqrt{p}.\end{equation}
We also note that
\begin{equation}\#\left\{(a_1,a_2,\ldots,a_k) \in \mathbb{F}_p^k : \prod_{i=1}^{k}a_i+1 \equiv 0 \pmod p \right\}\ \leq\ (p-1)^{k-1}.\end{equation}

Hence
\begin{equation}\sum\left(\frac{1+a_{1}a_{2}\ldots a_{k}}{p}\right)'\ = \  o(p^k).\end{equation}
The result follows.
\end{proof}

\section{Concluding remarks}

In this paper, we attempted to answer two fundamental questions about $k$-Diophantine $m$-tuples:
\begin{enumerate}
    \item Given a sufficiently large prime $p$, is there always a $k$-Diophantine $m$-tuple in $\mathbb{F}_p$?
    \item Can we count the number of $k$-Diophantine $m$-tuples in $\mathbb{F}_p$ for a given prime $p$? 
    
\end{enumerate}
We give a complete answer to $(1)$ in Theorem \ref{existence}. While we were unable to answer $(2)$ for an arbitrary pair $(k,m)$, we were able to come up with an asymptotic formula for the number of $k$-Diophantine $k$-tuples for any $k$ (Theorem~\ref{thm:asymptotic}) and an explicit formula for $(k,m)=(3,3)$ (Theorem~\ref{thm:num-3-Dio}).

Some questions asked for the usual Diophantine $m$-tuples can be asked for $k$-Diophantine $m$-tuples as well. For instance, 
\begin{enumerate}
    \item Can we find a formula that, for a given $n$, allows us to count the number of $k$-Diophantine $m$-tuples with property $D(n)$, i.e. the set where $k$-wise products of distinct elements is $n$ less than a perfect square? What about about $n$ less than the $t$-th power?
    
    \item What can be said about the existence of such tuples in other commutative rings with unity, like Gaussian integers, integers, $p$-adic integers, polynomial rings etc?
\end{enumerate}

\section*{Acknowledgements}

This project was done as a part of the Polymath Jr. Program during the summer 2021. We would like to thank Professor Andrej Dujella for graciously granting us access to his book \textit{Number Theory}~\cite{dujella2021number}, and Rowan Mckee for writing the computer program for calculating the number of 3-Diophantine triples.

\appendix
\section{Proofs from the Preliminaries}\label{sec:applegsums}

For completeness we include proofs of some standard results about sums of Legendre symbols.

\begin{lem}\label{pf-leg-lin}
    For arbitrary integers $a$ and $b$, and a prime $p\nmid a$, we have
    \begin{equation}
        \sum_{x=0}^{p-1}\left(\frac{ax+b}{p}\right) \ = \   0.
    .\end{equation}
\end{lem}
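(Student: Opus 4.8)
The plan is to exploit the fact that the hypothesis $p \nmid a$ makes the affine map $x \mapsto ax+b$ a bijection of $\mathbb{Z}/p\mathbb{Z}$ onto itself, so that summing the Legendre symbol over all inputs is the same as summing it over all residues. First I would observe that since $\gcd(a,p)=1$, the residue $a$ is invertible modulo $p$; consequently, as $x$ runs through the complete residue system $\{0,1,\ldots,p-1\}$, the values $ax+b$ also run through a complete residue system modulo $p$. Because the Legendre symbol depends only on the residue class of its argument, this reindexing gives
\begin{equation}
    \sum_{x=0}^{p-1}\left(\frac{ax+b}{p}\right) \ = \ \sum_{y=0}^{p-1}\left(\frac{y}{p}\right).
\end{equation}

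Next I would evaluate the right-hand sum directly. The term $y=0$ contributes $\left(\frac{0}{p}\right)=0$ by the definition of the Legendre symbol, so it suffices to compute $\sum_{y=1}^{p-1}\left(\frac{y}{p}\right)$. This sum counts the quadratic residues modulo $p$ with weight $+1$ and the quadratic nonresidues with weight $-1$. Since there are exactly $(p-1)/2$ quadratic residues and $(p-1)/2$ nonresidues among $\{1,\ldots,p-1\}$, the positive and negative contributions cancel, so the sum is $0$. Combining the two observations yields the claim.

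There is no genuine obstacle in this argument; the only point requiring care is the standard fact that the nonzero residues split evenly into $(p-1)/2$ quadratic residues and $(p-1)/2$ nonresidues, which holds because the squaring map on $\mathbb{F}_p^{\times}$ is two-to-one onto its image. An alternative one-line proof would appeal to the orthogonality of the nontrivial multiplicative character $\eta$ on $\mathbb{F}_p^{\times}$, whose sum over the group vanishes, but the bijection-and-counting approach above is the most self-contained and matches the elementary tools already used in the paper.
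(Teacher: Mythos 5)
Your proof is correct and follows essentially the same route as the paper's: both use that $x \mapsto ax+b$ permutes the residues modulo $p$ and then cancel the $(p-1)/2$ quadratic residues against the $(p-1)/2$ nonresidues. Your version is if anything slightly more careful, since you explicitly note that the $y=0$ term contributes nothing.
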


\begin{proof}
    As $p\nmid a$, $ax+b$ forms a complete set of residues modulo $p$ as $x$ runs through the integers $0$ to $p-1$. Every such set contains $\frac{p-1}{2}$ quadratic residues and $\frac{p-1}{2}$ quadratic non-residues. Hence,
    \begin{equation}
        \sum_{x=0}^{p-1}\left(\frac{ax+b}{p}\right) \ = \   \frac{p-1}{2} - \frac{p-1}{2} \ = \   0.\qedhere
    \end{equation}
\end{proof}

\begin{lem}\label{lem:pf-leg-quad}
For arbitrary integers $a, b, c$, and a prime $p$ such that $p \nmid a$, then
\begin{equation} 
    \sum_{x=0}^{p-1} \left( \frac{ax^2+bx+c}{p} \right)\ = \  
        \begin{cases}
            (p-1)\left(\frac{a}{p}\right) &\emph{if } p\mid b^2-4ac\\
            -\left(\frac{a}{p}\right) &\emph{otherwise}.
        \end{cases}
\end{equation}
\end{lem}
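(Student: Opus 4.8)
The plan is to reduce this quadratic character sum to a standard ``pure'' quadratic sum by completing the square, and then evaluate that sum directly. Since $p$ is odd and $p \nmid a$, the element $2a$ is invertible modulo $p$, and one checks the identity $4a(ax^2 + bx + c) = (2ax + b)^2 - D$, where $D = b^2 - 4ac$ is the discriminant. Using multiplicativity of the Legendre symbol together with $\left(\frac{4a}{p}\right) = \left(\frac{4}{p}\right)\left(\frac{a}{p}\right) = \left(\frac{a}{p}\right)$ (because $4$ is a perfect square), I would first establish
\[
\sum_{x=0}^{p-1}\left(\frac{ax^2+bx+c}{p}\right) = \left(\frac{a}{p}\right)\sum_{x=0}^{p-1}\left(\frac{(2ax+b)^2 - D}{p}\right).
\]

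Next I would perform the linear change of variables $y = 2ax + b$. As $x$ ranges over a complete residue system modulo $p$, so does $y$ (since $2a$ is invertible), so the problem reduces to evaluating $S := \sum_{y=0}^{p-1}\left(\frac{y^2 - D}{p}\right)$. If $p \mid D$, then $S = \sum_{y}\left(\frac{y^2}{p}\right)$, and since $\left(\frac{y^2}{p}\right) = 1$ for $y \neq 0$ and $0$ for $y = 0$, we get $S = p-1$, which yields the first case of the claim.

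The real work is the case $p \nmid D$, which I would handle by pushing the sum through the squaring map. Writing $N(v) = \#\{y : y^2 = v\} = 1 + \left(\frac{v}{p}\right)$, we have
\[
S = \sum_{v=0}^{p-1} N(v)\left(\frac{v - D}{p}\right) = \sum_{v}\left(\frac{v-D}{p}\right) + \sum_{v}\left(\frac{v}{p}\right)\left(\frac{v-D}{p}\right).
\]
The first sum vanishes by Lemma~\ref{pf-leg-lin}. For the second, the $v=0$ term is zero, and for $v \neq 0$ I factor $\left(\frac{v(v-D)}{p}\right) = \left(\frac{v^2}{p}\right)\left(\frac{1 - Dv^{-1}}{p}\right) = \left(\frac{1 - Dv^{-1}}{p}\right)$. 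As $v$ runs over the nonzero residues, $w = Dv^{-1}$ also runs over all nonzero residues (here $p \nmid D$ is essential), so $1 - w$ runs over $\mathbb{F}_p \setminus \{1\}$, giving $\sum_{u \neq 1}\left(\frac{u}{p}\right) = -\left(\frac{1}{p}\right) = -1$. Hence $S = -1$ and the full sum equals $-\left(\frac{a}{p}\right)$.

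The only genuine obstacle is the evaluation of $S$ when $p \nmid D$; the completing-the-square reduction and the change of variables are routine. The crucial observation is the factoring $\left(\frac{v(v-D)}{p}\right) = \left(\frac{1 - Dv^{-1}}{p}\right)$ valid for $v \neq 0$, which linearizes the product of two shifted factors and lets me reuse the linear sum identity of Lemma~\ref{pf-leg-lin}.
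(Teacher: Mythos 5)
Your proof is correct, and while the first half follows the same path as the paper (completing the square, using $\left(\frac{4a}{p}\right)=\left(\frac{a}{p}\right)$, and substituting $y=2ax+b$ to reduce everything to $S=\sum_{y}\left(\frac{y^2-D}{p}\right)$), your evaluation of $S$ in the case $p\nmid D$ is genuinely different. The paper imports the facts $S\equiv -1\pmod p$ and $|S|\le p$ from Apostol, concludes $S\in\{-1,p-1\}$, and then rules out $S=p-1$ by counting which terms can vanish. You instead compute $S$ exactly and self-containedly: you weight each value $v$ by the number of square roots $1+\left(\frac{v}{p}\right)$, kill the unweighted piece with Lemma~\ref{pf-leg-lin}, and linearize the remaining product via $\left(\frac{v(v-D)}{p}\right)=\left(\frac{1-Dv^{-1}}{p}\right)$ for $v\neq 0$, so that the substitution $w=Dv^{-1}$ reduces the sum to $\sum_{u\neq 1}\left(\frac{u}{p}\right)=-1$. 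All steps check out, including the $v=0$ bookkeeping and the requirement $p\nmid D$ for $w$ to range over all nonzero residues. What your route buys is independence from the external congruence $S\equiv-1\pmod p$ (whose standard proof goes through Euler's criterion and power sums), at the cost of a slightly longer computation; the paper's route is shorter but leans on the citation.
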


\begin{proof}
    Notice that this sum is equivalently written as
    \begin{equation}
        \left( \frac{4a}{p} \right) \sum_{x=0}^{p-1}\left( \frac{4a^2x^2+4abx+4ac}{p} \right)
        \ = \   \left( \frac{a}{p} \right) \sum_{x=0}^{p-1} \left( \frac{{(2ax+b)}^2-(b^2-4ac)}{p} \right) \ = \   \left( \frac{a}{p} \right) S,\end{equation}
    where $S = \sum_{x=0}^{p-1} \left( \frac{{(2ax+b)}^2-(b^2-4ac)}{p} \right)$.
    
    Since the numbers $ax+b$ form a complete set of residues modulo $p$ as $x$ varies from 0 to $p-1$, we have
    \begin{equation}
        S \ = \   \sum_{l=0}^{p-1} \left( \frac{l^2-(b^2-4ac)}{p} \right)
    .\end{equation}
    It is well known that $S \equiv -1 \pmod p$ and $|S| \le p$ \cite[Ex. 10.10]{apostol1998analnt}. From this, we obtain $S = -1, p-1$. If $S = p-1$, then $p-1$ terms in $S$ must take the value 1 and there is exactly one term, i.e., when $l=l'$, that equals 0. As this $l'$ must satisfy both $p \mid {l'}^2-(b^2-4ac)$ and $p \mid {(-l')}^2-(b^2-4ac)$, it follows that $l' = 0$ and $p \mid b^2-4ac$. Conversely, if $p \mid b^2-4ac$, then
    \begin{equation}
        S \ = \   \sum_{l=0}^{p-1} \left( \frac{l^2}{p} \right) \ = \   0 + 1\cdot p-1 \ = \   p-1.\end{equation}
    Hence, $S = -1$ if and only if $p \nmid b^2-4ac$. Thus,
    \begin{equation} 
        \sum_{x=0}^{p-1} \left( \frac{ax^2+bx+c}{p} \right)\ = \  
        \begin{cases}
            (p-1)\left(\frac{a}{p}\right) &\mbox{if } p\mid b^2-4ac\\
            -\left(\frac{a}{p}\right) &\mbox{otherwise}.
        \end{cases}
    \qedhere
    \end{equation}
\end{proof}

\begin{lem}[{\cite[Ex.~5.63]{lidl1997finite}}]
\label{lidl-5.63-pf}
    Let $a_1, \ldots, a_k$ be distinct elements of $\mathbb{F}_q$, $q$ odd, and let $\epsilon_1, \ldots, \epsilon_k$ be $k$ given integers, each of which is 1 or -1. Let $N(\epsilon_1, \ldots, \epsilon_k)$ denote the number of $c \in \mathbb{F}_q$ with $\eta(c+a_j)=\epsilon_j$ for $1 \le j \le k$, where $\eta$ is the quadratic character of $\mathbb{F}_q$. Then
    \begin{equation}
        N(\epsilon_1, \dots, \epsilon_k) \ = \  
        \frac{1}{2^k} \sum_{c\in \mathbb{F}_q}
        \left[1+\epsilon_1 \eta(c+a_1)\right] \cdots \left[1+\epsilon_k \eta(c+a_k)\right] - A
    ,\end{equation}
    where $0 \le A \le k/2$.
\end{lem}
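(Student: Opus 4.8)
The plan is to interpret the quantity $2^{-k}\prod_{j=1}^k [1 + \epsilon_j \eta(c+a_j)]$ as an indicator for the event that all $k$ conditions $\eta(c+a_j)=\epsilon_j$ hold simultaneously, and then sum over $c \in \mathbb{F}_q$. The key observation I would record first is that whenever $c + a_j \neq 0$, the single factor $\frac{1}{2}[1 + \epsilon_j \eta(c+a_j)]$ equals $1$ if $\eta(c+a_j) = \epsilon_j$ and $0$ if $\eta(c+a_j) = -\epsilon_j$, since $\eta(c+a_j) = \pm 1$ and $\epsilon_j = \pm 1$. Consequently, away from the points where some $c+a_j$ vanishes, the full product equals $2^k$ precisely on the set counted by $N(\epsilon_1, \ldots, \epsilon_k)$ and $0$ elsewhere.

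First I would split the sum over $c$ into those $c$ lying in the exceptional set $\{-a_1, \ldots, -a_k\}$ and those lying outside it, noting that these $k$ exceptional points are distinct because the $a_j$ are distinct. For $c \notin \{-a_1, \ldots, -a_k\}$ every $c+a_j$ is nonzero, so by the observation above the product is $2^k$ exactly when $\eta(c+a_j) = \epsilon_j$ for all $j$ and $0$ otherwise. Because each $\epsilon_j \in \{-1,1\}$, any $c$ counted by $N$ automatically satisfies $c+a_j \neq 0$ for all $j$, so the total contribution of the non-exceptional $c$ to the sum is exactly $2^k N(\epsilon_1, \ldots, \epsilon_k)$.

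Next I would treat the exceptional terms one at a time. For $c = -a_i$ the $i$-th factor degenerates to $1 + \epsilon_i \eta(0) = 1$, while each of the remaining $k-1$ factors is $0$ or $2$ because $a_j - a_i \neq 0$ for $j \neq i$. Writing $T_i$ for this term gives $T_i \in \{0, 2^{k-1}\}$. Assembling both contributions then yields
\begin{equation}
\sum_{c \in \mathbb{F}_q} \prod_{j=1}^k \left[1 + \epsilon_j \eta(c+a_j)\right] \ = \ 2^k N(\epsilon_1, \ldots, \epsilon_k) + \sum_{i=1}^k T_i,
\end{equation}
which, after dividing by $2^k$, is precisely the claimed identity with $A = 2^{-k} \sum_{i=1}^k T_i$.

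The only genuinely subtle point, and the step I expect to be the main obstacle, is controlling these degenerate terms to obtain the stated bound on $A$. Since $0 \le T_i \le 2^{k-1}$ for each of the $k$ exceptional points, I would conclude $0 \le \sum_{i=1}^k T_i \le k \cdot 2^{k-1}$ and hence $0 \le A \le k/2$, as required. Everything else is bookkeeping: confirming that the correction is supported only on $\{-a_1, \ldots, -a_k\}$ and that the count $N$ never includes any of those points.
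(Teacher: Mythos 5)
Your proof is correct and follows essentially the same route as the paper's: both arguments rest on the observation that the product is $2^k$ exactly on the set counted by $N$, vanishes at all other nondegenerate $c$, and contributes either $0$ or $2^{k-1}$ at each of the at most $k$ points where some $c+a_j=0$, giving $0\le A\le k/2$. Your reorganization of the sum into exceptional and non-exceptional $c$ is just a cleaner bookkeeping of the same case analysis.
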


\begin{proof}
    Notice that, for fixed $c$, if $\eta(c+a_j)=\epsilon_j$, then $\epsilon_{j}\eta(c+a_j)=1$; otherwise, $\epsilon_{j}\eta(c+a_j)=-1$ for $c+a_j\neq 0$ or $\eta(c+a_j)=0$.
    This implies that, if $\eta(c+a_j)=\epsilon_j$ for all $j\in \mathbb{N}$ such that $1\le j\le k$ and $c$ fixed, then
    \begin{equation}
        \left[1+\epsilon_1 \eta(c+a_1)\right] \cdots \left[1+\epsilon_k \eta(c+a_k)\right] \ = \   2^k
    .\end{equation}
    Otherwise, 
    \begin{equation}
        \left[1+\epsilon_1 \eta(c+a_1)\right] \cdots \left[1+\epsilon_k \eta(c+a_k)\right] \ = \   0
    \end{equation}
    if $\epsilon_i \eta(c+a_i)=-1$ for some $i\in \mathbb{N}$ such that $1\le i\le k$, or
    \begin{equation}
        \left[1+\epsilon_1 \eta(c+a_1)\right] \cdots \left[1+\epsilon_k \eta(c+a_k)\right] \ = \   2^{k-1}
    \end{equation}
    if $\epsilon_i \eta(c+a_i)=0$ for some $i\in \mathbb{N}$ and $\eta(c+a_j)=1$ for all $j\neq i$. Note that there is at most one $a_i$ with the property that $\eta(c+a_i)=0$ since $a_1, \ldots, a_k$ are distinct and $c$ is constant. Thus, we have
    \begin{equation}
        N(\epsilon_1, \dots, \epsilon_k) \ = \  
        \frac{1}{2^k} \sum_{c\in \mathbb{F}_q}
        \left[1+\epsilon_1 \eta(c+a_1)\right] \cdots \left[1+\epsilon_k \eta(c+a_k)\right] - A
    ,\end{equation}
    where $0 \le A \le k/2$.
\end{proof}

\begin{lem}[{\cite[Ex.~5.64]{lidl1997finite}}]
\label{lidl-5.64-pf}
We have
    \begin{equation}
        \left| N(\epsilon_1, \ldots, \epsilon_k) - \frac{q}{2^k}\right| \ \le\
        \left(\frac{k-2}{2}+\frac{1}{2^k}\right)q^{1/2} + \frac{k}{2}
    .\end{equation}
\end{lem}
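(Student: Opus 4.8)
The plan is to start from the exact identity for $N(\epsilon_1,\dots,\epsilon_k)$ furnished by Lemma~\ref{lidl-5.63-pf} and then control the error term by expanding the product and applying Weil's bound subset by subset. First I would expand
\[
\prod_{j=1}^k\bigl[1+\epsilon_j\eta(c+a_j)\bigr]\ =\ \sum_{S\subseteq\{1,\dots,k\}}\Bigl(\prod_{j\in S}\epsilon_j\Bigr)\prod_{j\in S}\eta(c+a_j),
\]
and interchange the order of summation so that the sum over $c\in\mathbb{F}_q$ falls on $\prod_{j\in S}\eta(c+a_j)=\eta\bigl(\prod_{j\in S}(c+a_j)\bigr)$. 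The empty subset contributes $\sum_c 1=q$, which after dividing by $2^k$ becomes the main term $q/2^k$. Each singleton $S=\{j\}$ contributes $\epsilon_j\sum_c\eta(c+a_j)=0$ by Lemma~\ref{pf-leg-lin}.

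The key step is bounding the subsets with $|S|\ge 2$. Since the $a_j$ are distinct, the polynomial $f_S(c)=\prod_{j\in S}(c+a_j)$ is squarefree of degree $|S|\ge 1$, hence it is not of the form $k\,g(c)^2$; thus Weil's theorem (Theorem~\ref{weil-estimate}), applied with $\chi=\eta$ and $n=2$, gives $\bigl|\sum_c\eta(f_S(c))\bigr|\le(|S|-1)\sqrt{q}$. Using $\bigl|\prod_{j\in S}\epsilon_j\bigr|=1$ and the triangle inequality, the total discrepancy in $\frac{1}{2^k}\sum_c\prod_j[\cdots]-\frac{q}{2^k}$ is at most $\frac{\sqrt q}{2^k}\sum_{|S|\ge 2}(|S|-1)$.

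A short binomial computation then finishes the estimate. I would use $\sum_{j=0}^k\binom{k}{j}(j-1)=k2^{k-1}-2^k$, and remove the $j=0$ and $j=1$ terms to obtain $\sum_{|S|\ge 2}(|S|-1)=k2^{k-1}-2^k+1$; dividing by $2^k$ produces exactly $\bigl(\tfrac{k-2}{2}+\tfrac{1}{2^k}\bigr)\sqrt q$. Finally I would reinstate the $-A$ term from Lemma~\ref{lidl-5.63-pf}: since $0\le A\le k/2$, one more application of the triangle inequality gives
\[
\Bigl|N(\epsilon_1,\dots,\epsilon_k)-\tfrac{q}{2^k}\Bigr|\ \le\ \Bigl(\tfrac{k-2}{2}+\tfrac{1}{2^k}\Bigr)\sqrt q+\tfrac{k}{2},
\]
as claimed. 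The only genuine subtlety is the verification of Weil's hypothesis — that $f_S$ is not a constant times a square — which is immediate from the distinctness of the $a_j$; everything else is the bookkeeping of the binomial sum.
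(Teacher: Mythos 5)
Your proposal is correct and follows essentially the same route as the paper's proof: expand the product from Lemma~\ref{lidl-5.63-pf}, isolate the main term $q/2^k$, apply Weil's bound to each multilinear piece (grouped by degree $d$ with multiplicity $\binom{k}{d}$), evaluate the binomial sum $\sum_d\binom{k}{d}(d-1)=k2^{k-1}-2^k+1$, and absorb the $-A$ term into the $k/2$. Your write-up is in fact slightly more careful than the paper's --- you explicitly verify Weil's squarefreeness hypothesis and dispatch the empty and singleton subsets, and your binomial arithmetic is the correct one (the paper's intermediate line ``$k2^{k-1}-2^{k-1}$'' is a typo for $k2^{k-1}-2^k+1$, which is what actually yields $\tfrac{k-2}{2}+\tfrac{1}{2^k}$ after dividing by $2^k$).
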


\begin{proof}
    First, we expand the product in the expression for $N(\epsilon_1, \ldots, \epsilon_k)$ given above. 
    
    \begin{multline} \label{quad_expand}
        [1+\epsilon_1\eta(c+a_1)]\cdots [1+\epsilon_k\eta(c+a_k)] \ = \   1+\sum_{i=1}^{k} \epsilon_i \eta(c+a_i) + \sum_{j\neq i}^k \sum_{i=1}^k \epsilon_i \epsilon_j \eta(c+a_i)\eta(c+a_j) \\ + \cdots + \epsilon_1 \epsilon_2 \cdots \epsilon_k \eta(c+a_1) \eta(c+a_2) \cdots \eta(c+a_k)
    .\end{multline}
    
    By the multiplicative nature of the quadratic character, \ref{quad_expand} is equivalent to
    \begin{multline*}
         1+\sum_{i=1}^{k} \epsilon_i \eta(c+a_i) + \sum_{j\neq i}^k \sum_{i=1}^k \epsilon_i \epsilon_j \eta[(c+a_i)(c+a_j)] + \cdots + \epsilon_1 \epsilon_2 \cdots \epsilon_k \eta[(c+a_1) (c+a_2)\cdots (c+a_k)].
    \end{multline*}
    
    We see that the product expands into a sum of quadratic characters of functions in $c$ of various degrees. More specifically, we find that there are $\binom{k}{i}$ functions of degree $i$ where $1 \le i \le k$. By Theorem~\ref{weil-estimate}, we can show that
    \begin{align*}
        \left| \sum_{c\in \mathbb{F}_q} \sum_{d=1}^k \eta(f_d(x)) \right| &\ \le\ \sum_{d=1}^k \binom{k}{d}(d-1)\sqrt{q} \\
        &\ = \   \sqrt{q} \left( \sum_{d=1}^k \binom{k}{d}d - \sum_{d=1}^k \binom{k}{d} \right)\\
        &\ = \   \left(k 2^{k-1} - 2^{k-1}\right) \sqrt{q}
    ,\end{align*}
    where $f_d$ are all the functions of degree $d$ for which the quadratic character is evaluated. It follows that
    \begin{equation}
        \left| N(\epsilon_1, \ldots, \epsilon_k) - \frac{q}{2^k}\right| \le
        \left(\frac{k-2}{2}+\frac{1}{2^k}\right)q^{1/2} + \frac{k}{2}.\qedhere\end{equation}
\end{proof}

\newpage
\section{Tables and Graphs}\label{sec:tablesgraphs}

The tables and graphs shown are shown here to offer some computational verification of the formula presented in Theorem~\ref{thm:num-3-Dio} and demonstrate what the program we used is capable of.  As the formula predicts, we see in Table~\ref{table:1} and Figure~\ref{figure:num_graph} a positive quadratic trend in the number of 3-Diophantine triples against the size of $\mathbb{F}_p$.  The reason we decided not to plot values of $p$ past 200 was the computational cost of doing so and that one can see this trend for $p<300$. The program was not only able to give the number of $k$-Diophantine $m$-tuples in $\mathbb{F}_p$, but also the explicit $m$-tuples themselves, as seen in Table~\ref{table:list}.

\begin{table}[h]
\caption{Number of 3-Diophantine triples in $\mathbb{F}_p$ for various primes $p$.}
    \centering
    \begin{tabular}{ | c || c | c | c | c | }
        \hline
        $p$ & $p\equiv 1, 2 \pmod 3$ & $N_3(p)$ & $a$ (if $p\equiv 1 \pmod 3$) & Error term $(a+1)/3$\\ \hline
        5 & 2 & 2 & - & -\\ \hline
        7 & 1 & 11 & 2 & 1\\ \hline
        11 & 2 & 60 & - & -\\ \hline
        13 & 1 & 110 & -1 & 0\\ \hline
        17 & 2 & 280 & - & -\\ \hline
        19 & 1 & 407 & -4 & -1\\ \hline
        23 & 2 & 770 & - & -\\ \hline
        29 & 2 & 1638 & - & -\\ \hline
        31 & 1 & 2031 & 2 & 1\\ \hline
        37 & 1 & 3572 & 5 & 2\\ \hline
        41 & 2 & 4940 & - & -\\ \hline
        43 & 1 & 5739 & -4 & -1\\ \hline
        101 & 2 & 80850 & - & -\\ \hline
        229 & 1 & 97472 & 11 & 4\\ \hline
    \end{tabular}
\label{table:1}
\end{table}

\begin{figure}[htb]
  \centering
  \caption{Graph of number of 3-Diophantine triples in $\mathbb{F}_p$ for various $p$.}
  \includegraphics[width=\linewidth]{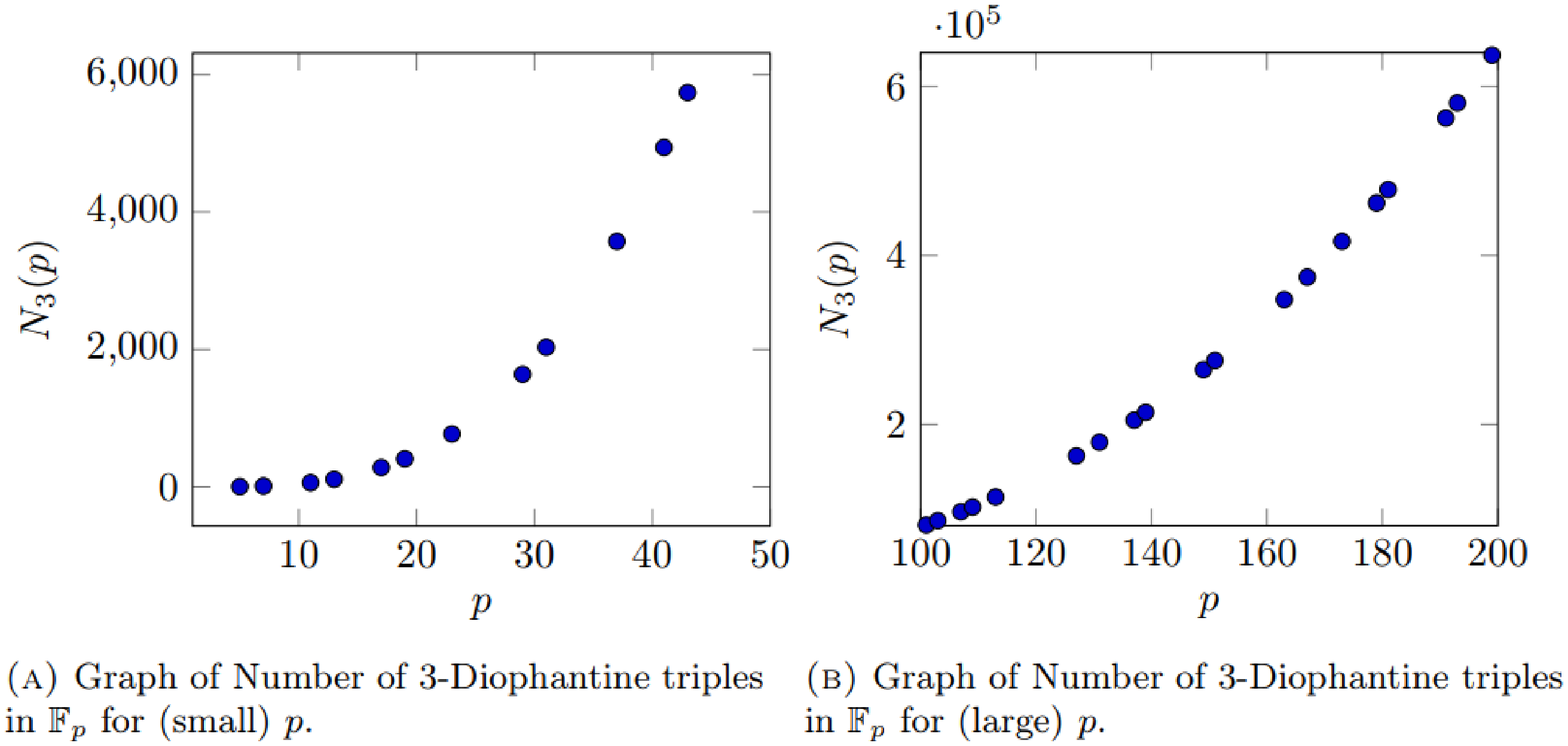}
  
  \label{figure:num_graph}
\end{figure}

\begin{figure}[htb]
  \centering
  \caption{Log-log graph of number of 3-Diophantine triples in $\mathbb{F}_p$ for various $p$.}
  \includegraphics[width=\linewidth]{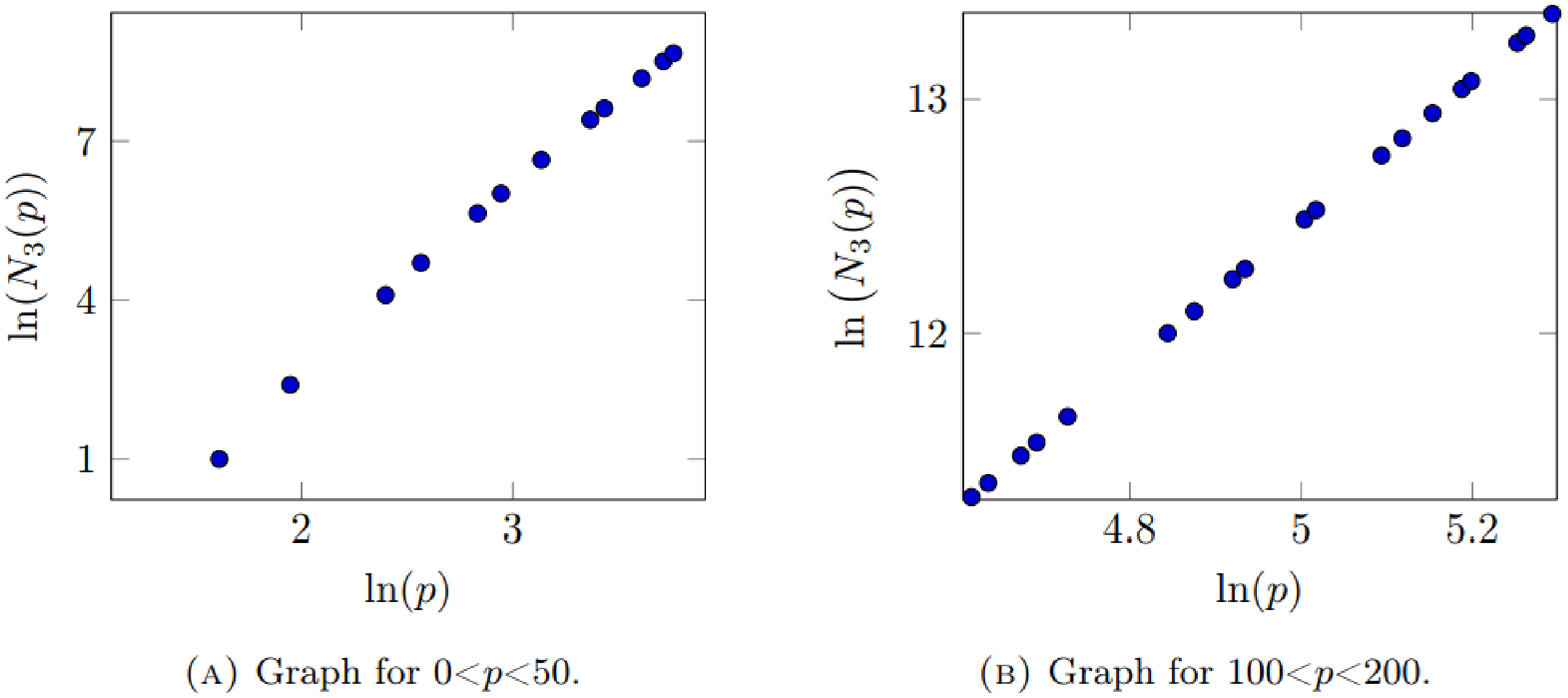}
  
  \label{figure:loglog_graph}
\end{figure}

\newpage
\begin{longtable}{ | c | c | c | c | c | }
    \caption{List of all 3-Diophantine Quadruples in $\mathbb{F}_{23}$.}
        \label{table:list}\\
        \hline
        
        (1, 2, 4, 6)& (1, 2, 4, 20)& (1, 2, 6, 14)& (1, 2, 11, 12)& (1, 2, 13, 15)\\\hline (1, 2, 13, 19)& (1, 2, 15, 20)& (1, 2, 17, 19)& (1, 2, 19, 20)& (1, 3, 4, 10)\\ \hline (1, 3, 4, 21)& (1, 3, 5, 8)& (1, 3, 8, 15)& (1, 3, 15, 18)& (1, 3, 16, 19)\\  \hline (1, 3, 17, 19)& (1, 3, 19, 21)& (1, 4, 6, 12)& (1, 4, 7, 20)& (1, 4, 10, 21)\\ \hline (1, 4, 17, 18)& (1, 4, 17, 21)& (1, 4, 19, 20)& (1, 5, 6, 9)& (1, 5, 6, 20)\\ \hline (1, 5, 8, 9)& (1, 5, 16, 19)& (1, 6, 8, 20)& (1, 6, 9, 21)& (1, 6, 12, 21)\\ \hline (1, 6, 19, 20)& (1, 6, 19, 21)& (1, 6, 21, 22)& (1, 7, 9, 11)& (1, 7, 9, 18)\\ \hline (1, 7, 10, 12)& (1, 7, 11, 13)& (1, 7, 13, 20)& (1, 8, 9, 21)& (1, 8, 10, 13)\\ \hline (1, 8, 10, 21)& (1, 8, 15, 20)& (1, 8, 20, 22)& (1, 9, 11, 13)& (1, 9, 14, 17)\\ \hline (1, 9, 17, 18)& (1, 10, 12, 14)& (1, 10, 12, 16)& (1, 10, 14, 15)& (1, 11, 12, 21)\\ \hline (1, 11, 13, 16)& (1, 11, 13, 19)& (1, 11, 16, 22)& (1, 11, 17, 19)& (1, 11, 17, 21)\\ \hline (1, 11, 21, 22)& (1, 12, 16, 22)& (1, 12, 21, 22)& (1, 14, 15, 17)& (1, 14, 15, 18)\\ \hline (1, 14, 16, 18)& (1, 15, 20, 22)& (1, 16, 18, 22)& (1, 18, 20, 22)& (2, 3, 4, 21)\\ \hline (2, 3, 4, 22)& (2, 3, 5, 8)& (2, 3, 5, 14)& (2, 3, 8, 12)& (2, 3, 8, 22)\\ \hline (2, 3, 9, 20)& (2, 3, 14, 20)& (2, 3, 19, 22)& (2, 4, 5, 13)& (2, 4, 5, 15)\\ \hline (2, 4, 6, 22)& (2, 4, 9, 15)& (2, 4, 9, 20)& (2, 4, 10, 15)& (2, 4, 13, 21)\\ \hline (2, 4, 20, 21)& (2, 5, 7, 10)& (2, 5, 7, 14)& (2, 5, 7, 16)& (2, 5, 8, 14)\\ \hline (2, 5, 12, 15)& (2, 5, 13, 16)& (2, 5, 13, 21)& (2, 5, 16, 21)& (2, 6, 7, 16)\\ \hline (2, 6, 11, 22)& (2, 6, 16, 21)& (2, 7, 9, 14)& (2, 7, 9, 15)& (2, 7, 9, 18)\\ \hline (2, 7, 10, 15)& (2, 7, 12, 17)& (2, 7, 16, 18)& (2, 8, 10, 13)& (2, 8, 10, 19)\\ \hline (2, 8, 10, 22)& (2, 8, 11, 16)& (2, 8, 13, 16)& (2, 8, 14, 19)& (2, 8, 18, 19)\\ \hline (2, 9, 14, 16)& (2, 9, 15, 18)& (2, 9, 17, 22)& (2, 10, 13, 18)& (2, 10, 19, 22)\\ \hline (2, 11, 12, 15)& (2, 11, 15, 18)& (2, 11, 15, 20)& (2, 11, 16, 18)& (2, 12, 17, 22)\\ \hline (2, 13, 16, 17)& (2, 13, 18, 19)& (2, 14, 19, 20)& (2, 16, 17, 21)& (2, 17, 19, 22)\\ \hline (3, 4, 6, 7)& (3, 4, 6, 22)& (3, 4, 10, 11)& (3, 4, 14, 16)& (3, 4, 14, 21)\\ \hline (3, 5, 10, 18)& (3, 5, 10, 20)& (3, 5, 14, 17)& (3, 5, 18, 22)& (3, 6, 7, 17)\\ \hline (3, 6, 9, 20)& (3, 6, 14, 17)& (3, 6, 14, 20)& (3, 6, 15, 22)& (3, 6, 16, 17)\\ \hline (3, 6, 16, 20)& (3, 6, 18, 22)& (3, 7, 8, 12)& (3, 7, 9, 19)& (3, 7, 10, 11)\\ \hline (3, 7, 10, 12)& (3, 7, 17, 19)& (3, 7, 19, 22)& (3, 9, 10, 12)& (3, 9, 12, 19)\\ \hline (3, 9, 17, 20)& (3, 9, 19, 21)& (3, 10, 12, 20)& (3, 10, 13, 18)& (3, 10, 18, 20)\\ \hline (3, 11, 12, 15)& (3, 11, 13, 14)& (3, 11, 13, 16)& (3, 11, 14, 16)& (3, 11, 15, 21)\\ \hline (3, 12, 13, 16)& (3, 12, 15, 16)& (3, 12, 15, 20)& (3, 12, 16, 20)& (3, 14, 17, 20)\\ \hline (3, 15, 18, 21)& (3, 16, 17, 20)& (4, 5, 6, 7)& (4, 5, 8, 18)& (4, 5, 18, 19)\\ \hline (4, 6, 8, 11)& (4, 6, 8, 17)& (4, 6, 11, 22)& (4, 6, 12, 15)& (4, 6, 12, 17)\\ \hline (4, 6, 15, 17)& (4, 7, 8, 14)& (4, 7, 9, 19)& (4, 7, 19, 20)& (4, 8, 11, 17)\\ \hline (4, 8, 13, 14)& (4, 8, 13, 21)& (4, 8, 18, 21)& (4, 9, 10, 11)& (4, 9, 10, 19)\\ \hline (4, 9, 13, 19)& (4, 9, 13, 20)& (4, 9, 16, 20)& (4, 10, 14, 15)& (4, 10, 18, 21)\\ \hline (4, 11, 12, 19)& (4, 12, 13, 14)& (4, 12, 13, 20)& (4, 12, 15, 17)& (4, 13, 21, 22)\\ \hline (4, 14, 15, 16)& (4, 16, 17, 20)& (4, 16, 20, 22)& (4, 17, 20, 22)& (4, 17, 21, 22)\\ \hline (4, 20, 21, 22)& (5, 6, 7, 11)& (5, 6, 9, 20)& (5, 6, 10, 12)& (5, 6, 10, 18)\\ \hline (5, 6, 11, 13)& (5, 6, 11, 18)& (5, 6, 12, 18)& (5, 7, 11, 21)& (5, 7, 14, 22)\\ \hline (5, 7, 16, 22)& (5, 8, 9, 15)& (5, 8, 9, 18)& (5, 8, 9, 21)& (5, 8, 11, 21)\\ \hline (5, 8, 14, 15)& (5, 8, 15, 19)& (5, 8, 18, 19)& (5, 9, 11, 18)& (5, 9, 12, 16)\\ \hline (5, 9, 12, 18)& (5, 9, 15, 22)& (5, 9, 16, 21)& (5, 9, 16, 22)& (5, 9, 18, 22)\\ \hline (5, 9, 20, 21)& (5, 10, 17, 18)& (5, 10, 19, 21)& (5, 11, 17, 18)& (5, 12, 17, 18)\\ \hline  
        
        (5, 13, 16, 17)& (5, 14, 17, 22)& (5, 15, 19, 21)& (5, 15, 20, 22)& (6, 7, 11, 13)\\ \hline 
        (6, 7, 13, 21)& (6, 7, 17, 21)& (6, 8, 11, 13)& (6, 8, 12, 17)& (6, 8, 12, 19)\\ \hline (6, 8, 13, 14)& (6, 8, 15, 19)& (6, 9, 15, 22)& (6, 10, 12, 16)& (6, 10, 14, 17)\\ \hline (6, 10, 16, 18)& (6, 11, 15, 19)& (6, 12, 15, 17)& (6, 12, 16, 21)& (6, 13, 14, 17)\\ \hline (6, 14, 18, 22)& (6, 16, 17, 21)& (6, 16, 19, 20)& (6, 18, 19, 22)& (6, 19, 21, 22)\\ \hline (7, 8, 10, 12)& (7, 8, 13, 16)& (7, 8, 13, 21)& (7, 8, 15, 16)& (7, 9, 14, 19)\\ \hline (7, 9, 14, 21)& (7, 9, 15, 19)& (7, 10, 11, 15)& (7, 10, 11, 22)& (7, 11, 13, 20)\\ \hline (7, 11, 15, 20)& (7, 12, 13, 18)& (7, 12, 13, 20)& (7, 12, 13, 22)& (7, 12, 14, 20)\\ \hline (7, 12, 17, 18)& (7, 13, 15, 16)& (7, 13, 16, 22)& (7, 13, 18, 21)& (7, 14, 19, 20)\\ \hline (7, 16, 18, 22)& (7, 18, 19, 22)& (8, 9, 10, 17)& (8, 9, 10, 19)& (8, 9, 15, 19)\\ \hline (8, 9, 16, 19)& (8, 10, 12, 19)& (8, 10, 13, 22)& (8, 11, 13, 16)& (8, 11, 16, 20)\\ \hline (8, 12, 17, 19)& (8, 12, 18, 21)& (8, 12, 20, 21)& (8, 13, 14, 22)& (8, 14, 15, 19)\\ \hline (8, 14, 18, 22)& (8, 18, 20, 22)& (9, 10, 12, 19)& (9, 11, 12, 18)& (9, 11, 13, 20)\\ \hline (9, 12, 13, 14)& (9, 12, 16, 22)& (9, 13, 14, 17)& (9, 13, 14, 19)& (9, 13, 15, 19)\\ \hline (9, 14, 16, 19)& (9, 14, 16, 21)& (9, 14, 17, 22)& (10, 11, 16, 18)& (10, 11, 16, 20)\\ \hline (10, 11, 16, 22)& (10, 11, 17, 18)& (10, 12, 14, 20)& (10, 12, 16, 20)& (10, 13, 14, 17)\\ \hline (10, 13, 17, 18)& (10, 13, 18, 20)& (10, 14, 19, 20)& (10, 15, 16, 21)& (10, 16, 18, 21)\\ \hline (10, 16, 20, 22)& (10, 19, 20, 22)& (11, 12, 14, 18)& (11, 12, 15, 21)& (11, 13, 14, 19)\\ \hline (11, 14, 16, 18)& (11, 15, 17, 19)& (12, 13, 15, 16)& (12, 13, 15, 17)& (12, 13, 16, 22)\\ \hline (12, 13, 17, 18)& (12, 15, 16, 21)& (12, 17, 19, 22)& (13, 15, 19, 21)& (13, 15, 21, 22)\\ \hline (14, 15, 16, 21)& (14, 15, 18, 21)& (14, 16, 18, 21)& (14, 17, 21, 22)& (19, 20, 21, 22)\\ \hline
\end{longtable}

\newpage


\Addresses

\end{document}